\newtheorem{Theorem}{Theorem}[section]
\newtheorem{Lemma}[Theorem]{Lemma}
\newtheorem{Definition}[Theorem]{Definition}
\newtheorem{Proposition}[Theorem]{Proposition}
\newtheorem{Corollary}[Theorem]{Corollary}
\theoremstyle{remark}
\newtheorem{Remark}[Theorem]{Remark}
\newtheorem{Example}[Theorem]{Example}
\newcommand{\cut}{\operatorname{CUT}}
\def\N{{\mathbb N}}
\newcommand{\cutn}{\ensuremath{\operatorname{CUT}(n)}}
\newcommand{\vn}{v^n}
\newcommand{\ind}{\mathds{1}}
\newcommand{\bone}{{\mathbf{1}}}
\newcommand{\bx}{{\mathbf{x}}}
\newcommand{\by}{{\mathbf{y}}}
\newcommand{\lambdahash}{\lambda^{\#}}
\newcommand{\lambdas}{\lambda^s}
\newcommand{\code}{\operatorname{code}}
\newcommand{\decode}{\mathrm{decode}}
\title{An Explicit Formula for Vertex Enumeration in the CUT(n) Polytope via Probabilistic Methods}
\author{Nevena Mari\'c\thanks{School of Computing, Union University, Belgrade, Serbia. email: nmaric@raf.rs. \\
An extended abstract related to this work appeared in \cite{maric2018cut}}}
\date{\today}
\begin{document}

\maketitle
%
%
\begin{abstract}
We present an explicit closed-form formula for the vertices of the classical cut polytope $\operatorname{CUT}(n)$, defined as the convex hull of cut vectors of the complete graph $K_n$. Our derivation proceeds via a related polytope, denoted $\mathbf{1}$-$\operatorname{CUT}(n)$, whose vertices are obtained by flipping all bits of the $\operatorname{CUT}(n)$ vertices. This polytope arises naturally in a probabilistic context involving agreement probabilities among symmetric Bernoulli random variables which serves as the starting point of this work.

Our approach constructs the vertex set recursively via a binary encoding that stems from this probabilistic perspective. We prove that the resulting sequence of encoded integers, when appropriately scaled, exhibits an almost-linear behavior closely approximating the line $y = x - \frac{1}{2}$. This structure motivates the introduction of the alternating cycle function, an integer-valued map whose key property is power-of-two composition invariance. The function serves as the foundation for our closed-form enumeration formula.

The result provides a rare instance of explicit vertex characterization for a $0$/$1$-polytope and offers a transparent combinatorial construction independent of enumeration algorithms.
\end{abstract}

\medskip
\noindent\textbf{Keywords:}  Cut polytope; Vertex enumeration; Bernoulli distribution; Correlations; Binary encoding; 0/1 polytopes.

\medskip
\noindent {\bf MSC2020:} 52B05 (Primary); 52B12 (Secondary).

\section{Introduction}\label{intro}

The \emph{vertex enumeration problem} - listing all vertices of a polyhedron defined by linear inequalities - is a foundational challenge in combinatorial optimization and computational geometry~\cite{matheiss1980survey,dyer1983complexity}. This problem is central to a wide range of applications, from operations research to computational biology, and has motivated the development of several algorithmic paradigms (see, e.g. \cite{dyer1977algorithm} for an early review of applications). 
For bounded polyhedra, vertex enumeration problem is equivalent to facet generation - that is, listing all facets of a polytope when its vertices are explicitly given~\cite{lovasz1992combinatorial}.

A variety of algorithmic approaches to vertex and facet enumeration have been proposed in the literature (see e.g. \cite{elbassioni2018enumerating} and references therein). 
There is a large body of literature on vertex enumeration, and nearly all of it focuses on search-based algorithms for generating vertices or facets. This paper takes a different perspective: rather than designing new enumeration procedures, we provide an explicit closed-form formula for all vertices of a particular polytope. While our approach is not algorithmic in nature, we still mention key contributions in this area for context.

Classical algorithmic approaches include the double description method \cite{motzkin1953double}, which incrementally builds the vertex set, and pivoting or reverse search algorithms \cite{avis1991pivoting}, \cite{avis2000revised}, which traverse the polytope's skeleton. Despite substantial progress, the complexity of vertex enumeration remains prohibitive for general polyhedra, with NP-hardness results for unbounded cases and only partial tractability for special classes~\cite{dyer1983complexity}.  For more on computational complexity of the problem see ~\cite{khachiyan2008generating} and references therein.

A particularly important family of polytopes arises as the convex hulls of binary vectors, known as \emph{0/1-polytopes}. These structures encode a wide variety of combinatorial objects, such as matchings, stable sets, and cut vectors, and their binary nature enables specialized algorithmic techniques. Bussieck and L{\"u}bbecke~\cite{bussieck1998vertex} demonstrated that vertex enumeration for 0/1-polytopes can be reduced to a sequence of linear programs, while Behle et al.~\cite{behle20070} introduced binary decision diagrams (BDDs) to efficiently encode and enumerate vertices, exploiting the underlying binary structure. More recently, Merino and M{\"u}tze~\cite{merino2024traversing} improved enumeration delay bounds by leveraging Gray codes and Hamilton paths on 0/1-polytope skeletons. Binary encoding not only facilitates algorithmic efficiency but also reveals  combinatorial and algebraic properties~\cite{codes2008}.

A central example in this context is the \emph{cut polytope} $\mathrm{CUT}(n)$, introduced by Barahona and Mahjoub~\cite{barahona1986cut}, which is defined as the convex hull of cut vectors of the complete graph $K_n$. Each vertex of $\mathrm{CUT}(n)$ corresponds to a bipartition $(S, \bar{S})$ of the vertex set $\{1, 2, \ldots, n\}$, encoded as a binary vector in $\{0,1\}^{\binom{n}{2}}$ where the $(i,j)$-th coordinate is 1 if vertices $i$ and $j$ belong to different parts of the bipartition, and 0 otherwise. The cut polytope plays a pivotal role in combinatorial optimization, most notably as the feasible region for the max-cut problem, which has applications ranging from circuit design to statistical physics, particularly in the study of spin glasses~\cite{dezalaurent1997}. An excellent overview of 0/1-polytopes and their applications, including the cut polytope, is given in~\cite{ziegler2000}.

The cut polytope has gained additional significance through a probabilistic interpretation developed in Huber and Mari\'c~\cite{huberm2017}. They addressed the basic question of which correlation patterns can arise among symmetric Bernoulli random variables (binary variables taking values $0$ or $1$ with equal probability) by analyzing \emph{agreement probabilities} $\mathbb{P}(B_i = B_j)$ for pairs of such variables. Their main result showed that the set of all attainable agreement probabilities forms a convex polytope whose vertices are obtained by flipping all coordinates of the $\cutn$ vertices. We refer to this bitwise complement of the cut polytope as $\bone$-$\cutn$. It should be noted that some probabilistic interpretation of the cut polytope can be dated back to the work of Avis~\cite{avis1977}.

Despite the cut polytope's  importance, explicit formulas for its vertices have remained elusive. Existing methods for vertex enumeration, such as those based on BDDs or iterative traversal, provide algorithmic solutions but do not yield direct combinatorial descriptions of the vertex structure. 

In this work, we address this gap by deriving an explicit closed-form expression for the vertex set of $\operatorname{\bone\text{-}CUT}(n)$ polytope. Our approach generates the vertices directly via a periodic integer - generating  function, bypassing iterative enumeration entirely.

\medskip

\noindent
\textbf{Main results.} \
Our starting point is the probabilistic interpretation of the $\bone$-$\operatorname{CUT}(n)$ polytope in terms of agreement probabilities among symmetric Bernoulli random variables. From this perspective, we construct an explicit algorithm that generates the full set of vertices without resorting to iterative enumeration or traversal of the polytope's structure.

We then introduce a binary encoding that maps each vertex to a unique integer. This encoding allows us to view the vertex set as integers. When plotted, and after appropriate scaling, these integers reveal an almost-linear alignment. This phenomenon is not merely visual: we formalize it in a proposition that shows the scaled vertex sequence lies close to the line $y = x - \frac{1}{2}$ and quantifies the deviation from linearity.

The recurrence relations observed among the encoded vertices lead to the definition of a new integer-valued function, the \emph{alternating cycle function}. While reminiscent of discrete triangular waves, this function exhibits distinct compositional properties. It serves as the foundation of our explicit closed-form formula for the vertices of $\bone$-$\operatorname{CUT}(n)$ (Theorem~\ref{Thm:1}).

This result provides a direct, closed-form description of the entire vertex set and reveals structural regularities not evident from algorithmic enumeration alone.

\medskip

\noindent
The remainder of the paper is organized as follows. Section~2 reviews the definition and basic properties of cut polytopes. Section~3 discusses the probabilistic interpretation of $\bone-\operatorname{CUT}(n)$ in terms of agreement probabilities. Section~4 introduces the binary encoding framework. Section~5 develops the recursive formula for vertex enumeration. Section~6 explores symmetries and compositional properties of the alternating cycle function, and Section~7 presents the main theorem with a worked example and a corollary for $\cutn$. Concluding remarks are provided in Section~8. An appendix contains an algorithm for computing the vertices.

\section{Cut polytopes}
We begin by recalling the classical definitions and key properties of cut polytopes, which constitute the central focus of this work.

Let $G = (V, E)$ be a graph with vertex set $V$ and edge set $E$. For $S \subseteq V$, a \emph{cut} is the partition $(S, S^C)$ of the vertices, and the \emph{cut-set} consists of all edges connecting a node in $S$ to a node in $S^C$.

Let $V_n = [n] = \{1, \ldots, n\}$, $E_n = \{(i, j) : 1 \leq i < j \leq n\}$, and $K_n = (V_n, E_n)$ be the complete graph on $n$ vertices.

\begin{Definition}[Cut vector]
For every $S \subseteq [n]$, the \emph{cut vector} $\delta(S) \in \{0,1\}^{E_n}$ is defined by
\[
\delta(S)_{ij} = 
\begin{cases}
1, & \text{if } |S \cap \{i, j\}| = 1, \\
0, & \text{otherwise}
\end{cases}
\]
for all $1 \leq i < j \leq n$.
\end{Definition}

\begin{Definition}[Cut polytope]
The \emph{cut polytope} $\operatorname{CUT}(n)$ is the convex hull of all cut vectors of $K_n$:
\[
\operatorname{CUT}(n) = \operatorname{conv}\left\{ \delta(S) : S \subseteq [n] \right\}.
\]
\end{Definition}

\begin{Remark}
Since $\delta(S) = \delta([n] \setminus S)$ (complementary subsets yield identical cut vectors), the polytope $\operatorname{CUT}(n)$ has exactly $2^{n-1}$ distinct vertices, corresponding to the $2^{n-1}$ non-equivalent bipartitions of $[n]$. Each $\delta(S)$ is a $0/1$-vector in $\{0,1\}^{\binom{n}{2}}$, so $\operatorname{CUT}(n)$ is a $0/1$-polytope of dimension $\binom{n}{2}$. For a comprehensive treatment, see~\cite{ziegler2000,dezalaurent1997}.
\end{Remark}
\begin{Remark}\label{remark:n12}
For small values of \( n \), the vertices of \( \operatorname{CUT}(n) \) are easy to describe. When \( n = 1 \), the complete graph \( K_1 \) has no edges, so \( \operatorname{CUT}(1) \) consists of a single point in \( \mathbb{R}^0 \). For \( n = 2 \), the graph \( K_2 \) has one edge, and the cut vectors are $\{0, 1\}$, forming the vertices of \( \operatorname{CUT}(2) \) in \( \mathbb{R}^1 \). 
\end{Remark}
\begin{Example}
For $n=3$, the cut polytope $\operatorname{CUT}(3)$ has $2^{2} = 4$ vertices:
\[
\delta(\{1\}) = (1,1,0), \quad \delta(\{2\}) = (1,0,1), \quad \delta(\{3\}) = (0,1,1), \quad \delta(\emptyset) = (0,0,0).
\]
These form the vertices of a tetrahedron in $\mathbb{R}^3$.
\end{Example}

\noindent
The cut polytope is a well-studied $0/1$-polytope with deep connections to combinatorial optimization, most notably as the feasible region for the max-cut problem. Its structure and properties are also central in the study of correlations among symmetric Bernoulli random variables, as will be discussed in Section~3.

\section{Probabilistic interpretation of cut polytopes}

It is well known that the correlation between two random variables is constrained to the interval $[-1,1]$. However, for specific distributions, the attainable range of correlations is often strictly smaller and depends on the marginal distributions. These extremal values, known as Fr{\'e}chet bounds, are theoretically known \cite{whitt1976} but are rarely explicit for general distributions. As the dimension increases, the problem of characterizing all attainable correlation matrices becomes considerably more complex. For $n \geq 3$, a complete description is generally unavailable, except in special cases.

A correlation matrix is a symmetric positive semi-definite matrix with all diagonal entries equal to $1$. The set of all such matrices of order $n$ is denoted by $\mathcal{E}_n$ and is sometimes called the \emph{elliptope}~\cite{laurent1995}. For Gaussian marginals, every point in $\mathcal{E}_n$ is attainable, but for other distributions, very little is known. 

The case of symmetric Bernoulli distributions represents a notable exception where complete characterization is possible~\cite{huberm2017}. This tractability arises from the discrete nature of the distribution and the specific symmetry properties of binary random variables. Moreover, this special case has proven useful as a key component in algorithmic strategies for analyzing the attainable correlation structures of more general distributions~\cite{huberm2015}.

A \emph{symmetric Bernoulli random variable} is a binary random variable taking values $0$ and $1$ with equal probability. This distribution will be referred to as $\operatorname{Bern}(1/2)$. Let $\mathcal{B}_n$ denote the set of all $n$-variate symmetric Bernoulli distributions, i.e., probability measures $\mu$ on $\{0,1\}^n$ such that each marginal is $\operatorname{Bern}(1/2)$:
\[
\sum_{\mathbf{x} \in \{0,1\}^n : \mathbf{x}(k) = 0} \mu(\mathbf{x}) = \frac{1}{2} \quad \text{for } k = 1, \ldots, n.
\]

\begin{Remark}
The constraint that each marginal distribution is $\operatorname{Bern}(1/2)$ defines a system of linear equalities on the probability simplex, ensuring that $\mathcal{B}_n$ is a polytope~\cite{devroye2015copulas}.
\end{Remark}

For $\mu \in \mathcal{B}_n$, let $\mathbb{P}_\mu(\cdot)$, $\mathbb{E}_\mu[\cdot]$, $\operatorname{Cov}_\mu(\cdot, \cdot)$, and $\operatorname{Corr}_\mu(\cdot, \cdot)$ denote probability, expectation, covariance, and correlation under $\mu$.

\begin{Definition}[Agreement probability]
Let $(B_1, \ldots, B_n) \sim \mu$, where $\mu \in \mathcal{B}_n$. For all $1 \leq i < j \leq n$, the \emph{agreement probability} is defined as
\[
\lambda_\mu(i, j) := \mathbb{P}_\mu(B_i = B_j).
\]
We collect these values into the vector
\[
\lambda_\mu := \left( \lambda_\mu(i, j) \right)_{1 \leq i < j \leq n} \in [0,1]^{\binom{n}{2}}.
\]
The map sending $\mu$ to its vector of agreement probabilities is denoted by
\[
\Lambda : \mathcal{B}_n \to [0,1]^{\binom{n}{2}}, \qquad \Lambda(\mu) = \lambda_\mu.
\]
\end{Definition}

\begin{Remark}
Although the agreement probability $\lambda_\mu(i, j)$ can be defined for all pairs $i, j$, we include in the vector $\lambda_\mu$ only the entries with $i < j$. This avoids redundancy, since $\lambda_\mu(i, j) = \lambda_\mu(j, i)$ by symmetry, and ensures that each unordered pair is represented exactly once. This convention is standard in the study of the cut polytope, where each coordinate of the vector corresponds uniquely to an edge of the complete graph $K_n$, indexed by pairs with $1 \leq i < j \leq n$. The precise relationship between the agreement probability map $\Lambda$ and the cut polytope $\operatorname{CUT}(n)$ will be established in Section~3.1.
\end{Remark}

The pairwise correlation between $B_i$ and $B_j$ under $\mu$ is given by
\[
 \operatorname{Corr}_\mu(B_i, B_j) = 4 \mathbb{E}_\mu[B_i B_j] - 1.
\]

\begin{Remark}
The agreement probability and correlation are linearly related:
\[
\lambda_\mu(i, j) = \mathbb{P}_\mu(B_i = B_j) = \frac{1}{2}\left(1 + \operatorname{Corr}_\mu(B_i, B_j)\right).
\]
\end{Remark}

Huber and Mari\'c~\cite{huberm2017} studied elements of $\mathcal{B}_n$ via their agreement probabilities and provided necessary and sufficient conditions for an agreement matrix to be attainable for general $n$. They showed that the set of all attainable agreement probability vectors forms a convex polytope, specifically the $\bone$-$\operatorname{CUT}(n)$ polytope, as described below.

\subsection{Cut polytopes via agreement probabilities}

To establish the connection between agreement probabilities and cut polytopes, we define diagonal distributions in $\mathcal{B}_n$. Let $\mathbf{1}$ denote the all-ones vector in $\mathbb{R}^n$. For each $\mathbf{x} \in \{0,1\}^n$, define the probability measure $\pi_{\mathbf{x}}$ on $\{0,1\}^n$ by
\[
\pi_{\mathbf{x}}(\mathbf{x}) = \pi_{\mathbf{x}}(\mathbf{1} - \mathbf{x}) = \frac{1}{2}, \qquad \pi_{\mathbf{x}}(\mathbf{y}) = 0 \text{ for } \mathbf{y} \notin \{\mathbf{x}, \mathbf{1} - \mathbf{x}\}.
\]

\begin{Remark}
Note that $\pi_{\mathbf{x}} \in \mathcal{B}_n$ and $\pi_{\mathbf{x}} = \pi_{\mathbf{1} - \mathbf{x}}$. These distributions are important because their agreement probability vectors are precisely the extreme points of $\Lambda(\mathcal{B}_n)$.
\end{Remark}

For notational convenience, we write
\[
\lambda(\mathbf{x}) := \lambda_{\pi_{\mathbf{x}}} = \Lambda(\pi_{\mathbf{x}}).
\]

The correspondence between these sets is characterized by the following:

\begin{Proposition}[{\cite[Thm.~4]{huber2017arxiv}, \cite[Thm.~1]{huberm2017}}]
\label{prop:lambda-cut-relation}
The agreement probability map $\Lambda$ and the cut polytope satisfy:
\begin{enumerate}
    \item $\Lambda(\mathcal{B}_n) = \mathbf{1} - \operatorname{CUT}(n)$.
    \item The vertices of $\Lambda(\mathcal{B}_n)$ are precisely the agreement probability vectors of diagonal symmetric Bernoulli distributions:
    \[
    \operatorname{vert}(\Lambda(\mathcal{B}_n)) = \{\lambda(\mathbf{x}) : \mathbf{x} \in \{0,1\}^n\}.
    \]
\end{enumerate}
\end{Proposition}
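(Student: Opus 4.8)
The plan is to establish the two claims in tandem, using the diagonal distributions $\pi_{\mathbf{x}}$ as the bridge between $\mathcal{B}_n$ and $\operatorname{CUT}(n)$. The key observation to extract first is an explicit formula for $\lambda(\mathbf{x})$: for a diagonal distribution, $B_i = B_j$ holds either on the event $\mathbf{x}$ (with probability $1/2$) or on the event $\mathbf{1}-\mathbf{x}$ (with probability $1/2$), and in both cases the indicator of $\{B_i = B_j\}$ equals $1 - |\{i,j\} \cap \{k : \mathbf{x}(k)=1\}| \bmod 2$ — more precisely, writing $S = \{k : \mathbf{x}(k) = 1\}$, one checks $\lambda(\mathbf{x})_{ij} = 1 - \delta(S)_{ij}$, i.e. $\lambda(\mathbf{x}) = \mathbf{1} - \delta(S)$. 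This is a direct case check: if $i,j$ are both in $S$ or both outside, then $B_i = B_j$ with probability $1$ (agreement), and $\delta(S)_{ij}=0$; if exactly one of $i,j$ lies in $S$, then $B_i \ne B_j$ under both $\mathbf{x}$ and $\mathbf{1}-\mathbf{x}$, so $\lambda(\mathbf{x})_{ij} = 0$ and $\delta(S)_{ij}=1$. Hence $\{\lambda(\mathbf{x}) : \mathbf{x} \in \{0,1\}^n\} = \mathbf{1} - \{\delta(S) : S \subseteq [n]\}$.

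Next I would prove claim (1), $\Lambda(\mathcal{B}_n) = \mathbf{1} - \operatorname{CUT}(n)$, by a double inclusion that exploits the affine-linearity of $\Lambda$. The map $\mu \mapsto \lambda_\mu$ is affine in $\mu$, since each coordinate $\lambda_\mu(i,j) = \mathbb{P}_\mu(B_i = B_j) = \sum_{\mathbf{x} : \mathbf{x}(i) = \mathbf{x}(j)} \mu(\mathbf{x})$ is a linear functional of the probability vector $\mu$. Therefore $\Lambda(\mathcal{B}_n)$ is the image of a polytope under an affine map, hence a polytope, and it is the convex hull of the images of the vertices of $\mathcal{B}_n$. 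For the inclusion $\Lambda(\mathcal{B}_n) \subseteq \mathbf{1} - \operatorname{CUT}(n)$: every $\mu \in \mathcal{B}_n$ can be written as a convex combination of the $\pi_{\mathbf{x}}$ — this is the content of the decomposition of symmetric Bernoulli laws, and is where I would either cite \cite{huberm2017} or give the short argument that the $\pi_{\mathbf{x}}$ span $\mathcal{B}_n$ affinely — so $\lambda_\mu$ is a convex combination of the $\lambda(\mathbf{x}) = \mathbf{1} - \delta(S)$, placing it in $\mathbf{1} - \operatorname{CUT}(n)$. For the reverse inclusion, each $\mathbf{1} - \delta(S) = \lambda(\pi_{\mathbf{x}})$ with $\mathbf{x} = \mathbf{1}_S$ lies in $\Lambda(\mathcal{B}_n)$, and $\Lambda(\mathcal{B}_n)$ is convex, so it contains their convex hull $\mathbf{1} - \operatorname{CUT}(n)$.

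For claim (2), I would argue that the vertices of $\mathbf{1} - \operatorname{CUT}(n)$ are exactly $\mathbf{1}$ minus the vertices of $\operatorname{CUT}(n)$ (translation and reflection by $\mathbf{x} \mapsto \mathbf{1} - \mathbf{x}$ is an affine bijection, hence preserves the face lattice), and the vertices of $\operatorname{CUT}(n)$ are the cut vectors $\delta(S)$ by definition together with the standard fact that $0/1$ points that are vertices of the convex hull of a $0/1$ set are exactly that set — every $\delta(S)$ is a vertex of its own convex hull because $0/1$-vectors are in convex position (each is an extreme point, being a $\{0,1\}$ point that cannot be a proper convex combination of other $\{0,1\}$ points). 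Combining, $\operatorname{vert}(\Lambda(\mathcal{B}_n)) = \mathbf{1} - \{\delta(S)\} = \{\lambda(\mathbf{x})\}$.

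The main obstacle is the decomposition step asserting that every $\mu \in \mathcal{B}_n$ is a convex combination of the diagonal distributions $\pi_{\mathbf{x}}$ — equivalently, that the $\pi_{\mathbf{x}}$ are the only vertices of the polytope $\mathcal{B}_n$, or at least that they affinely span it. This is precisely the substantive result of Huber and Mari\'c, so I would invoke Proposition~\ref{prop:lambda-cut-relation}'s cited sources for it rather than reprove it; the remaining steps (the case check for $\lambda(\mathbf{x}) = \mathbf{1} - \delta(S)$, affinity of $\Lambda$, and the face-lattice preservation under the affine reflection $\mathbf{x} \mapsto \mathbf{1} - \mathbf{x}$) are routine.
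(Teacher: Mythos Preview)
The paper does not prove this proposition; it simply states it and cites \cite{huber2017arxiv,huberm2017}. Your outline goes further and is largely sound, but there is a genuine gap in your argument for the inclusion $\Lambda(\mathcal{B}_n) \subseteq \mathbf{1} - \operatorname{CUT}(n)$.

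You assert that every $\mu \in \mathcal{B}_n$ is a convex combination of the diagonal distributions $\pi_{\mathbf{x}}$, calling this ``the content of the decomposition of symmetric Bernoulli laws'' from the cited sources. That claim is false for $n \geq 3$. A dimension count settles it: $\mathcal{B}_n$ has dimension $2^n - 1 - n$, while there are only $2^{n-1}$ distinct $\pi_{\mathbf{x}}$, so their convex hull has dimension at most $2^{n-1} - 1$. Already for $n=3$ these numbers are $4$ and $3$. The $\pi_{\mathbf{x}}$ are extreme points of $\mathcal{B}_n$, but not all of them, and they do not affinely span $\mathcal{B}_n$. So the step ``$\mu$ is a convex combination of the $\pi_{\mathbf{x}}$, hence $\lambda_\mu$ is a convex combination of the $\lambda(\mathbf{x})$'' does not go through as written, and the cited references do not supply it.

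The repair is in fact simpler than the route you propose. Extend $\Lambda$ affinely to the full probability simplex on $\{0,1\}^n$; each Dirac mass $\delta_{\mathbf{x}}$ (which is \emph{not} in $\mathcal{B}_n$) is sent to the same vector $\lambda(\mathbf{x})$, since $\mathbb{P}_{\delta_{\mathbf{x}}}(B_i = B_j) = \ind(x_i = x_j)$. Hence for any $\mu$, symmetric or not,
\[
\lambda_\mu \;=\; \sum_{\mathbf{x} \in \{0,1\}^n} \mu(\mathbf{x})\,\lambda(\mathbf{x}),
\]
which is already a convex combination of the $\lambda(\mathbf{x})$ and therefore lies in $\mathbf{1} - \operatorname{CUT}(n)$. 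No decomposition of $\mu$ over the $\pi_{\mathbf{x}}$ is needed. With this correction, your remaining steps --- the identity $\lambda(\mathbf{x}) = \mathbf{1} - \delta(S)$, the reverse inclusion via $\pi_{\mathbf{x}} \in \mathcal{B}_n$, and the affine-bijection argument for the vertex set --- go through cleanly.
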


\begin{Remark} \label{remark:lambda}
Since $\lambda(\mathbf{x}) = \lambda(\mathbf{1} - \mathbf{x})$ for all $\mathbf{x} \in \{0,1\}^n$, each vertex of $\Lambda(\mathcal{B}_n)$ corresponds to two complementary binary vectors. To avoid redundancy, we consider only the \emph{canonical representatives} from each equivalence class. Following standard convention, we choose representatives with $x_1 = 1$ (the "upper half" of $\{0,1\}^n$), which reduces the vertex count to $2^{n-1}$, matching the known structure of $\operatorname{CUT}(n)$.
\end{Remark}

\begin{Lemma} \label{deflambda}
For any $\mathbf{x} \in \{0,1\}^n$, the agreement probability vector $\lambda(\mathbf{x})$ is given coordinate-wise by
\[
\lambda(\mathbf{x})_{ij} = \ind(x_i = x_j) \quad \text{for all } 1 \leq i < j \leq n,
\]
where $\ind(\cdot)$ denotes the indicator function
\[
\ind(A) =
\begin{cases}
1 & \text{if } A \text{ is true}, \\
0 & \text{otherwise}.
\end{cases}
\]
\end{Lemma}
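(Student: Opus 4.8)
The plan is to compute $\lambda(\mathbf{x})_{ij} = \mathbb{P}_{\pi_{\mathbf{x}}}(B_i = B_j)$ directly from the definition of the diagonal distribution $\pi_{\mathbf{x}}$, since the support of $\pi_{\mathbf{x}}$ consists of only two points. Let $(B_1,\dots,B_n) \sim \pi_{\mathbf{x}}$. By construction $\pi_{\mathbf{x}}$ is supported on the two antipodal points $\mathbf{x}$ and $\mathbf{1}-\mathbf{x}$, each carrying mass $\tfrac12$, so exactly one of the events $E_1 = \{(B_1,\dots,B_n) = \mathbf{x}\}$ and $E_2 = \{(B_1,\dots,B_n) = \mathbf{1}-\mathbf{x}\}$ occurs, and $\mathbb{P}_{\pi_{\mathbf{x}}}(E_1) = \mathbb{P}_{\pi_{\mathbf{x}}}(E_2) = \tfrac12$.

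Fix $1 \le i < j \le n$ and split into two cases according to whether $x_i = x_j$. If $x_i = x_j$, then on $E_1$ we have $B_i = x_i = x_j = B_j$, and on $E_2$ we have $B_i = 1 - x_i = 1 - x_j = B_j$; hence $B_i = B_j$ almost surely and $\lambda(\mathbf{x})_{ij} = 1$. If $x_i \ne x_j$, then on $E_1$ we have $B_i = x_i \ne x_j = B_j$, and on $E_2$ we have $B_i = 1 - x_i \ne 1 - x_j = B_j$, using that $a \mapsto 1 - a$ is a bijection on $\{0,1\}$ so that $x_i \ne x_j$ forces $1-x_i \ne 1-x_j$; hence $\mathbb{P}_{\pi_{\mathbf{x}}}(B_i = B_j) = 0$. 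Combining the two cases yields $\lambda(\mathbf{x})_{ij} = \ind(x_i = x_j)$ for every $1 \le i < j \le n$, which is the claim.

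Alternatively, the statement can be read off from Proposition~\ref{prop:lambda-cut-relation}(1): setting $S = \{k \in [n] : x_k = 1\}$, one has $\delta(S)_{ij} = \ind(|S \cap \{i,j\}| = 1) = \ind(x_i \ne x_j)$, and since $\lambda(\mathbf{x}) = \mathbf{1} - \delta(S)$ the coordinatewise identity follows from $1 - \ind(x_i \ne x_j) = \ind(x_i = x_j)$. Either route is entirely elementary; there is no genuine obstacle. The only point deserving a line of justification is that the conclusion $B_i \ne B_j$ is preserved under the global bit-flip $\mathbf{x} \mapsto \mathbf{1} - \mathbf{x}$, which is immediate from the bijectivity of complementation on $\{0,1\}$.
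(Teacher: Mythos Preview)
Your proof is correct and follows essentially the same route as the paper: compute $\mathbb{P}_{\pi_{\mathbf{x}}}(B_i=B_j)$ directly from the two-point support $\{\mathbf{x},\mathbf{1}-\mathbf{x}\}$ of $\pi_{\mathbf{x}}$, using that $x_i=x_j \iff 1-x_i=1-x_j$. The paper merely compresses your case split into the single line $\tfrac12\ind(x_i=x_j)+\tfrac12\ind(1-x_i=1-x_j)=\ind(x_i=x_j)$; your alternative derivation via Proposition~\ref{prop:lambda-cut-relation} is a valid extra remark but not needed.
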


\begin{proof}
This follows directly from the definition of $\pi_{\mathbf{x}}$ and the linearity of expectation. For any $1 \leq i < j \leq n$,
\[
\lambda(\mathbf{x})_{ij} = \mathbb{P}_{\pi_{\mathbf{x}}}(B_i = B_j) = \frac{1}{2}\ind(x_i = x_j) + \frac{1}{2}\ind((1-x_i) = (1-x_j)) = \ind(x_i = x_j).
\]
For additional details, see~\cite[Proposition~4.2]{huber2017arxiv}.
\end{proof}

This lemma highlights the direct combinatorial link between binary vectors and agreement probability vectors, setting the stage for our encoding-based approach to vertex enumeration.

Note that if $\mathbf{x}$ has $n$ coordinates, then $\lambda(\mathbf{x})$ has $\binom{n}{2}$ entries.

\begin{Example}
Applying Lemma~\ref{deflambda} to the following examples:
\begin{enumerate}
\item $\lambda(1,1,0) = (1,0,0)$, since $x_1 = x_2 = 1$, $x_1 \neq x_3$, and $x_2 \neq x_3$.
\item $\lambda(0,1,1,0) = (0,0,1,1,0,0)$, since only $x_2 = x_3 = 1$ and $x_1 = x_4 = 0$.
\item $\lambda(1,1,0,1,0) = (1,0,1,0,0,1,0,0,1,0)$, with agreements between positions $(1,2)$, $(1,4)$, $(3,5)$, and $(4,4)$ (but not $(4,4)$ since $i < j$).
\end{enumerate}
\end{Example}

\begin{Remark}
It is worth noting that if one considers disagreement probabilities, i.e., $\mathbb{P}(B_i \neq B_j)$, instead of agreement probabilities, the resulting attainable set forms the $\operatorname{CUT}(n)$ polytope directly, rather than $\bone$-$\operatorname{CUT}(n)$. In this work, we follow the approach from~\cite{huberm2017}, focusing on agreement probabilities because they are more intuitively related to pairwise correlations and provide a natural probabilistic interpretation that aligns with correlation analysis.
\end{Remark}

This probabilistic interpretation of the cut polytope motivates the binary encoding and explicit vertex enumeration developed in the following sections.

\section{Binary encoding}
Having established the connection between agreement probabilities and cut polytopes, we now turn to the question of how to efficiently enumerate and represent their vertices. To this end, we introduce a binary encoding scheme that will play a key role in our explicit formula.

In order to enumerate vertices of $\bone$-$\operatorname{CUT}(n)$, we use binary encoding of $0/1$-vectors. Instead of elements $\bx \in \{0,1\}^n$, we work with integers $\{0,1,2,\ldots,2^n-1\}$. The main question is: \emph{Where does the map $\lambda$ send these integers?} This will be the focus of the rest of the paper.


\begin{Definition}[Binary encoding]
Let $\bx = (x_1, x_2, \ldots, x_n) \in \{0,1\}^n$. The binary encoding function
\[
\operatorname{code} : \{0,1\}^n \to \mathbb{N}
\]
is defined by
\[
\operatorname{code}(\bx) = \sum_{j=1}^n x_j \cdot 2^{n-j}.
\]
\end{Definition}

\noindent To ensure that the binary decoding from integers to vectors is also well-defined and unique, we fix the length of the binary vector in the following definition.

\begin{Definition}[Binary decoding]
Let $n \geq 1$ and $k \in \{0, 1, \ldots, 2^n - 1\}$. The binary decoding function
\[
\decode_n : \{0, 1, \ldots, 2^n - 1\} \to \{0,1\}^n
\]
is defined as the unique binary vector $\bx = (x_1, \ldots, x_n) \in \{0,1\}^n$ such that
\[
k = \sum_{j=1}^n x_j \cdot 2^{n-j}.
\]
If $k < 2^n$, $\decode_n(k)$ pads $k$'s binary representation with leading zeros to ensure a length-$n$ vector.
\end{Definition}

For example, $\code(0,1,1,1)=7$ and $\decode_4(7)=(0,1,1,1)$, but $\decode_3(7)=(1,1,1)$ and $\decode_5(7)=(0,0,1,1,1)$.

Since we will work simultaneously with $0/1$ vectors, their $0/1$ string versions, and corresponding integers, we define three levels of the $\lambda$ map. Let $m = \binom{n}{2}$.

\begin{description}
\item[Vector-level map:] As used so far, $\lambda : \{0,1\}^n \to \{0,1\}^m$ is a function on binary vectors. For $\bx = (x_1, \ldots, x_n) \in \{0,1\}^n$, the output is another binary vector $\lambda(\bx) \in \{0,1\}^m$.
\item[Integer-level map:] Define the induced map $\lambda^\# : \{0,1,\ldots,2^n-1\} \to \{0,1,\ldots,2^m-1\}$ by
\[
\lambda^\#(k) := \operatorname{code}\left( \lambda\left( \decode_n(k)\right) \right),
\]
where $\decode_n(k) \in \{0,1\}^n$ is the binary vector representation of $k$, and $\operatorname{code}$ maps a vector back to its integer representation.
\item[String-level map:] For binary strings $\bx_s \in \{0,1\}^n$, define the string-level map
\[
\lambda^s(\bx_s) := \lambda(\bx)_s,
\]
interpreting $\bx_s$ as a string of $0$s and $1$s. The output $\lambda^s(\bx_s) \in \{0,1\}^m$ is also treated as a string. We use $\bx_s$ to distinguish string representation from vector notation.
\end{description}

\begin{Example}
If $\bx = (0,1,1,1)$, then $\operatorname{code}(\bx) = 7$ and $\bx_s = 0111$.
\end{Example}

\begin{Remark}[Binary strings with fixed length]
When it is helpful to emphasize that a binary string $\bx_s$ has a fixed number of bits, we use the subscript notation $\bx_{s[k]}$, where $k$ denotes the total length of the binary string. For example, both strings $0111$ and $00111$ represent the integer $7$, but $\bx_{s[4]} = 0111$ and $\bx_{s[5]} = 00111$ clarify the intended length. 
\end{Remark}

\vspace{2mm}
\noindent\textbf{Additional notation:}
\begin{description}
\item[Concatenation:] We use double vertical bars to denote concatenation: \\
if $\bx = 001$, then $0 \mathbin{\|} \bx := 0001$.
\item[Complement:] $\overline{\bx}$ is the complement of $\bx$, that is, $\mathbf{1} - \bx$; in the previous example, $\overline{\bx} = 110$.
\end{description}

From Lemma \ref{deflambda} for $\bx \in \{0,1\}^n$, we have
\[
\lambda^s(\bx_s) = \ind(x_1 = x_2)\, \ind(x_1 = x_3)\, \ldots\, \ind(x_{n-1} = x_n),
\]
and
\[
\lambda^\#(\operatorname{code}(\bx)) = \operatorname{code}\left( \ind(x_1 = x_2), \ind(x_1 = x_3), \ldots, \ind(x_{n-1} = x_n) \right),
\]
which is also the decimal interpretation of $\lambda^s(\bx_s)$.

For example, $\lambda^s(1000) = 000111$, $\lambda^s(1001) = 001100$, \ldots, $\lambda^s(1111) = 111111$. Also, $\lambda^\#(8) = 7$, $\lambda^\#(9) = 12$, \ldots, $\lambda^\#(15) = 63$. All $\lambda$-values for $n=4$ (and $n=3$) are shown in Table~\ref{table:n34}.

\begin{table}[h!]
\centering
\begin{tabular}{|c|c|c|c|}
\hline
$\bx_s$ & $\operatorname{code}(\bx)$ & $\lambda^s(\bx_s)$ & $\lambda^\#(\operatorname{code}(\bx))$ \\
\hline
100 & 4 & 001 & 1 \\
101 & 5 & 010 & 2 \\
110 & 6 & 100 & 4 \\
111 & 7 & 111 & 7 \\
\hline
\end{tabular}
\hspace{1cm}
\begin{tabular}{|c|c|c|}
\hline
$\bx_s$ & $\lambda^s(\bx_s)$ & $\lambda^\#(\operatorname{code}(\bx))$ \\
\hline
1000 & 000111 & 7 \\
1001 & 001100 & 12 \\
1010 & 010010 & 18 \\
1011 & 011001 & 25 \\
1100 & 100001 & 33 \\
1101 & 101010 & 42 \\
1110 & 110100 & 52 \\
1111 & 111111 & 63 \\
\hline
\end{tabular}
\caption{Values of $\lambda^s$ and $\lambda^\#$ for binary strings of length $n=3$ and $n=4$ starting with $1$.}
\label{table:n34}
\end{table}

These tables illustrate the mapping from binary strings to their encoded agreement probability vectors, providing concrete examples that will clarify the structure of our enumeration formulas.

One of the first things to notice in the tables is the monotonicity of $\lambdahash$. We will prove this important property at the end of this section. Prior to that, there is a lemma stating a recursive rule for the string-level map.

\begin{Lemma}[Recursive Rule for $\lambda^s$] \label{lemma:lambda-recursion}
Let $\bx_s \in \{0,1\}^k$, with $k \geq 2$. Then
\begin{align*}
\lambda^s(0 \mathbin{\|} \bx_s) &= \overline{\bx_s} \mathbin{\|} \lambda^s(\bx_s), \\
\lambda^s(1 \mathbin{\|} \bx_s) &= \bx_s \mathbin{\|} \lambda^s(\bx_s),
\end{align*}
where $\overline{\bx_s}$ denotes the bitwise complement of $\bx_s$.
\end{Lemma}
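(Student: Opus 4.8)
The plan is to unwind both identities directly from the coordinate formula of Lemma~\ref{deflambda} together with the fixed lexicographic ordering of the index pairs $1 \le i < j \le n$. Fix a bit $b \in \{0,1\}$, set $n := k+1$, and write $\by := b \mathbin{\|} \bx_s \in \{0,1\}^n$, so that $y_1 = b$ and $y_{i+1} = (x_s)_i$ for $i = 1, \ldots, k$. By Lemma~\ref{deflambda}, the string $\lambda^s(\by)$ lists the bits $\ind(y_i = y_j)$ as $(i,j)$ runs through all pairs with $1 \le i < j \le n$ in lexicographic order.

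First I would split the set of pairs into the $k$ pairs of the form $(1,j)$ with $2 \le j \le n$ and the remaining $\binom{k}{2}$ pairs $(i,j)$ with $2 \le i < j \le n$. Since the ordering is lexicographic, the pairs $(1,2), (1,3), \ldots, (1,n)$ occupy exactly the first $k$ coordinates of $\lambda^s(\by)$, and the pairs with $i \ge 2$ form the trailing block, in their own lexicographic order. Second, I would evaluate the leading block: for $2 \le j \le n$ we have $\ind(y_1 = y_j) = \ind\!\left(b = (x_s)_{j-1}\right)$, which equals $(x_s)_{j-1}$ when $b = 1$ and $1 - (x_s)_{j-1}$ when $b = 0$; hence the leading block is $\bx_s$ in the first case and $\overline{\bx_s}$ in the second. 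Third, I would treat the trailing block via the order-preserving relabelling $i \mapsto i-1$, $j \mapsto j-1$: the pairs $(i,j)$ with $2 \le i < j \le n$ correspond bijectively and in the same lexicographic order to the pairs $(i',j')$ with $1 \le i' < j' \le k$, and $\ind(y_i = y_j) = \ind\!\left((x_s)_{i-1} = (x_s)_{j-1}\right) = \ind\!\left((x_s)_{i'} = (x_s)_{j'}\right)$, so this block is exactly $\lambda^s(\bx_s)$. Concatenating the two blocks yields $\lambda^s(1 \mathbin{\|} \bx_s) = \bx_s \mathbin{\|} \lambda^s(\bx_s)$ and $\lambda^s(0 \mathbin{\|} \bx_s) = \overline{\bx_s} \mathbin{\|} \lambda^s(\bx_s)$.

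There is no genuine obstacle here; the only point warranting a one-line justification is the bookkeeping claim that restricting the lexicographic order on pairs $1 \le i < j \le n$ to those avoiding the index $1$, then shifting indices down by one, reproduces precisely the lexicographic order on pairs $1 \le i' < j' \le k$ — this is what makes the trailing block literally equal to $\lambda^s(\bx_s)$ rather than a permutation of it, and it follows immediately from the definition of lexicographic order. One could instead argue at the integer level through $\lambda^\#$, but working at the string level keeps the block decomposition transparent and matches the form used in the subsequent sections.
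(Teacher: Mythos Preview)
Your proposal is correct and follows essentially the same approach as the paper: split the index pairs into those involving the prepended coordinate and those that do not, evaluate the first block via $\ind(b = x_i)$, and identify the second block with $\lambda^s(\bx_s)$. The paper's proof is terser, taking the block decomposition as immediate from the definition, whereas you spell out the lexicographic-ordering bookkeeping that makes the trailing block literally equal to $\lambda^s(\bx_s)$; but the argument is the same.
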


Before proving the lemma, we present two examples that illustrate the recursive rule.
\begin{Example}
Let $\bx_s = 100$. Then $\lambda^s(0 \mathbin{\|} \bx_s) = \lambda^s(0100)$. From Table~\ref{table:n34} ($n=4$), we know that
\[
\lambda^s(0100) = 011001.
\]
If we apply Lemma~\ref{lemma:lambda-recursion} to the same string,
\[
\lambda^s(0 \mathbin{\|} 100) = \overline{100} \mathbin{\|} \lambda^s(100) 
\]
and reading $\lambda^s(100)$ from Table~\ref{table:n34} ($n=3$), we obtain the same result $011 \mathbin{\|} 001 = 011001$.
\end{Example}

Let's take another example with a string that starts with $1$.
\begin{Example}
We see that $\lambda^s(1100) = 100001$ from Table~\ref{table:n34}. Applying Lemma~\ref{lemma:lambda-recursion}, the same string is obtained:
\[
\lambda^s(1 \mathbin{\|} 100) = 100 \mathbin{\|} \lambda^s(100) = 100 \mathbin{\|} 001 = 100001.
\]
\end{Example}

\begin{proof}
Let $\bx = (x_1, \ldots, x_k) \in \{0,1\}^k$. Then
\[
\lambda^s(0 \mathbin{\|} \bx) = \ind(0 = x_1)\, \ldots\, \ind(0 = x_k) \mathbin{\|} \lambda^s(\bx).
\]
Observe that
\[
\ind(0 = x_i) =
\begin{cases}
1 & \text{if } x_i = 0, \\
0 & \text{if } x_i = 1,
\end{cases}
= \overline{x_i},
\]
which proves the first identity. The second follows similarly, since
\[
\lambda^s(1 \mathbin{\|} \bx) = \ind(1 = x_1)\, \ldots\, \ind(1 = x_k) \mathbin{\|} \lambda^s(\bx),
\]
and clearly $\ind(1 = x_i) = x_i$.
\end{proof}

Throughout this paper, we use the standard lexicographic order on binary vectors, denoted $>_{\mathrm{lex}}$ and $\geq_{\mathrm{lex}}$, as defined below.

\begin{Definition}[Lexicographic Order] \label{def:lex}
Let $x = (x_1, x_2, \dots, x_n)$ and $y = (y_1, y_2, \dots, y_n)$ be two binary strings or vectors of length $n$, where each $x_i, y_i \in \{0,1\}$.  
We say that $x$ is less than or equal to $y$ in the \emph{lexicographic order}, denoted $x \leq_{\mathrm{lex}} y$, if one of the following holds:
\begin{itemize}
  \item $x = y$, or
  \item there exists an index $k \in \{1, \dots, n\}$ such that
  \begin{itemize}
    \item $x_i = y_i$ for all $i < k$, and
    \item $x_k < y_k$.
  \end{itemize}
\end{itemize}
In the case where $x \neq y$, we write $x <_{\mathrm{lex}} y$. Similarly, we define $x \geq_{\mathrm{lex}} y$ if $y \leq_{\mathrm{lex}} x$.
\end{Definition}

For example, $(0,1,1,0) <_{\mathrm{lex}} (0,1,1,1)$ and $(1,0,0) >_{\mathrm{lex}} (0,1,1)$. Also $0110 <_{\mathrm{lex}} 0111$ and $ 011 <_{\mathrm{lex}} 100$.

\begin{Remark} 
For binary strings of the same fixed length,
\[ \bx_s <_{\mathrm{lex}} \by_s \Leftrightarrow \code(\bx) < \code(\by).
\] 
\end{Remark}

\begin{Proposition}[Monotonicity of $\lambda^\#$] \label{prop:monotone}
Let $\bx, \by \in \{0,1\}^n$ be binary vectors with $x_1 = y_1 = 1$. If $\operatorname{code}(\bx) < \operatorname{code}(\by)$, then $\lambda^\#(\bx) < \lambda^\#(\by)$.
\end{Proposition}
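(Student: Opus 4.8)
The plan is to induct on $n$ using the recursive rule for $\lambda^s$ (Lemma~\ref{lemma:lambda-recursion}), since that lemma already tells us how the string $\lambda^s(\bx_s)$ is assembled from a prefix block and a recursive tail. The base cases $n=2,3$ can be checked directly from Table~\ref{table:n34} (for $n=2$ there is only the single vertex $x_1=1$, so there is nothing to compare, and for $n=3$ the values $1,2,4,7$ are strictly increasing). For the inductive step, suppose the claim holds for binary strings of length $n-1$ starting with $1$, and let $\bx_s, \by_s \in \{0,1\}^n$ with $x_1=y_1=1$ and $\code(\bx) < \code(\by)$, equivalently $\bx_s <_{\mathrm{lex}} \by_s$. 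Since both start with $1$, Lemma~\ref{lemma:lambda-recursion} gives $\lambda^s(\bx_s) = \bx'_s \mathbin{\|} \lambda^s(\bx'_s)$ and $\lambda^s(\by_s) = \by'_s \mathbin{\|} \lambda^s(\by'_s)$, where $\bx'_s = (x_2,\ldots,x_n)$ and $\by'_s = (y_2,\ldots,y_n)$ are the length-$(n-1)$ tails.

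The comparison then splits into two cases according to how the lexicographic order of $\bx_s$ and $\by_s$ is realized. \textbf{Case 1: $\bx'_s <_{\mathrm{lex}} \by'_s$.} Then the first block of $\lambda^s(\by_s)$, namely $\by'_s$, is lexicographically strictly larger than the first block $\bx'_s$ of $\lambda^s(\bx_s)$; since these blocks have the same length, the concatenated strings satisfy $\lambda^s(\bx_s) <_{\mathrm{lex}} \lambda^s(\by_s)$ regardless of the tails, because lexicographic comparison of equal-length concatenations is decided by the first differing coordinate, which already occurs within the prefix block. By the Remark relating $<_{\mathrm{lex}}$ to $\code$ (both sides have the same length $\binom{n}{2}$), this gives $\lambda^\#(\bx) < \lambda^\#(\by)$. \textbf{Case 2: $\bx'_s = \by'_s$.} This case cannot occur: if the tails were equal then $\bx_s = \by_s$ (both start with $1$), contradicting $\code(\bx) < \code(\by)$. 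So in fact Case~1 is the only possibility, and the tails' first leading bit being forced is what makes the prefix block alone decisive.

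Wait — I should be more careful, because the tails $\bx'_s$ and $\by'_s$ need not start with $1$, so I cannot invoke the inductive hypothesis on them directly, but as just observed I do not need to: the prefix block already settles the comparison. The one subtlety to double-check is that when I write $\lambda^s(0\mathbin{\|}\bx'') = \overline{\bx''}\mathbin{\|}\lambda^s(\bx'')$ inside the recursion (which happens once I strip the leading $1$ and the tail has a $0$ somewhere), the complement operation is order-reversing, so a naive induction that recursed on strings not starting with a fixed bit would break. The point of restricting to $x_1 = 1$ in the statement is precisely that the outermost application of Lemma~\ref{lemma:lambda-recursion} uses the order-preserving branch $\lambda^s(1\mathbin{\|}\bx'_s) = \bx'_s \mathbin{\|} \lambda^s(\bx'_s)$, and the prefix block $\bx'_s$ is copied verbatim. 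So the main obstacle is conceptual rather than computational: recognizing that one should not recurse but instead observe that the verbatim-copied prefix block $(x_2,\ldots,x_n)$ already distinguishes the two images in the correct direction, making the tails irrelevant. I will therefore write the proof as a short structural argument: strip the leading $1$, apply the recursive rule, and note that lexicographic comparison of the length-$(n-1)$ prefix blocks $\bx'_s$ versus $\by'_s$ inherits the strict inequality $\code(\bx) < \code(\by)$ and propagates it to the full images.
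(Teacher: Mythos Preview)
Your argument is correct and rests on the same key observation as the paper's proof: applying Lemma~\ref{lemma:lambda-recursion} to $1\mathbin{\|}\bx'_s$ copies the tail $\bx'_s$ verbatim as the leading block of $\lambda^s$, so the lexicographic comparison of the images is already settled by that prefix. The paper phrases this slightly differently---it treats only consecutive codes $z$ and $z+1$ and inserts the intermediate string $z_s\mathbin{\|}11\ldots1$ to make the inequality explicit---whereas you handle arbitrary $\code(\bx)<\code(\by)$ in one stroke by noting $\bx'_s<_{\mathrm{lex}}\by'_s$; your version is marginally cleaner but the substance is identical. (Your initial induction scaffolding is indeed unnecessary, as you yourself observe; you could drop it and present only the structural argument in the final paragraph.)
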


\begin{proof}
Suppose $\bx_s$ and $\by_s$ are two binary strings of length $n$ with leading digit $1$, such that $\operatorname{code}(\bx) < \operatorname{code}(\by)$ and $\operatorname{code}(\bx) = 2^{n-1} + z$ for some $z < 2^{n-1}-1$, so that $\operatorname{code}(\by) = \operatorname{code}(\bx) + 1 = 2^{n-1} + z + 1$.

Then $\bx_s = 1 \mathbin{\|} z_s$ and $\by_s = 1 \mathbin{\|} (z+1)_s$, where $z_s$ and $(z+1)_s$ denote the $n-1$-bit binary representations of $z$ and $z+1$, respectively.

By Lemma~\ref{lemma:lambda-recursion}, we have:
\begin{align*}
\lambda^s(\by_s) &= (z+1)_s \mathbin{\|} \lambda^s((z+1)_s) \\
                 &>_{\mathrm{lex}} z_s \mathbin{\|} 11\ldots1 \\
                 &\geq_{\mathrm{lex}} z_s \mathbin{\|} \lambda^s(z_s) = \lambda^s(\bx_s).
\end{align*}
The first inequality follows from the strict increase in the left part of the string, and the second from the fact that $\lambda^s(z_s) \leq_{\mathrm{lex}} 11\ldots1$ where both sides have $\binom{n-1}{2}$ bits.
Clearly the inequality holds for integer-valued map also  and $\lambda^\#(\by) > \lambda^\#(\bx)$, as desired.
\end{proof}


\section{Encoded vertices of $\bone-\operatorname{CUT}(n)$}

In this section, we study the encoded vertices of the $\bone-\operatorname{CUT}(n)$ polytope using the $\lambda$-maps introduced previously.

\vskip2mm
\noindent\textbf{Vertices:} For every $n$, the polytope $\bone$-$\operatorname{CUT}(n)$ has $2^{n-1}$ vertices:
\[
\operatorname{vert}(\bone\text{-}\operatorname{CUT}(n)) = \{\nu^n(1),\ldots, \nu^n(2^{n-1})\}.
\]
Their encoded values for $k = 1,\ldots, 2^{n-1}$ are denoted by
\[
v^n(k) := \code(\nu^n(k)).
\]

\begin{Remark}\label{remark:v12} For $n=1$ and $2$ Remark \ref{remark:n12} can be directly extended to the vertices of $\bone- \cutn$ so 
\begin{itemize}
\item  $v^1=\{0\}$
\item $v^2=\{0,1\}$ .
\end{itemize}
\end{Remark}
Let $a_k^n := 2^{n-1} + k - 1$ for $k=1,\ldots,2^{n-1}$. These correspond to binary vectors of length $n$ with the first coordinate $1$, ordered lexicographically. For example, for $n=4$,  $a^4_1=8$ ($1000$), $a^4_2=9$ ($1001$), ..., $a^4_8=15$ ($1111$).

From Proposition~\ref{prop:lambda-cut-relation} and Remark~\ref{remark:lambda} now it stems the following Corollary.
\vskip6mm
\begin{Corollary}[Vertex Encoding]
Encoded vertices of $\bone-\operatorname{CUT}(n)$ are obtained by applying  the $\lambdahash$ operation to $a^n_k$:
\[
v^n(k) = \lambda^\#(a^n_k) = \lambda^\#(2^{n-1} + k - 1).
\]
\end{Corollary}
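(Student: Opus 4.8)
The plan is to unwind the three-level $\lambda$-map machinery and show that each side of the claimed identity is just a different name for the same object. First I would recall the definition of the vertex set: by Proposition~\ref{prop:lambda-cut-relation}(2), $\operatorname{vert}(\Lambda(\mathcal{B}_n)) = \{\lambda(\bx) : \bx \in \{0,1\}^n\}$, and by Proposition~\ref{prop:lambda-cut-relation}(1) we have $\Lambda(\mathcal{B}_n) = \bone$-$\operatorname{CUT}(n)$, so the vertices of $\bone$-$\operatorname{CUT}(n)$ are exactly the vectors $\lambda(\bx)$. Then, invoking Remark~\ref{remark:lambda}, since $\lambda(\bx) = \lambda(\overline{\bx})$, each vertex has a unique canonical representative $\bx$ with $x_1 = 1$, and there are exactly $2^{n-1}$ of these. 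The key observation is that the binary vectors of length $n$ with $x_1 = 1$ are precisely $\decode_n(2^{n-1}), \decode_n(2^{n-1}+1), \ldots, \decode_n(2^n - 1)$, i.e.\ $\decode_n(a^n_k)$ for $k = 1, \ldots, 2^{n-1}$, and (by the Remark relating lexicographic order on fixed-length strings to $\code$) this listing is in lexicographic order of the representatives.

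Next I would fix the indexing convention. The vertices $\nu^n(k)$ were declared to be ordered so that their encoded values $v^n(k) = \code(\nu^n(k))$ are listed for $k = 1, \ldots, 2^{n-1}$; to make the statement precise one takes $\nu^n(k) := \lambda(\decode_n(a^n_k))$, that is, the $k$-th vertex is the agreement-probability vector of the diagonal distribution associated with the $k$-th length-$n$ canonical binary vector. (One should note that this is consistent with the monotonicity established in Proposition~\ref{prop:monotone}: since $\code(a^n_k)$ is strictly increasing in $k$ and all $a^n_k$ have leading bit $1$, the values $\lambda^\#(a^n_k)$ are strictly increasing, so this is a genuine ordering of the $2^{n-1}$ distinct encoded vertices.) With this convention in place, the corollary is essentially a definitional chase:
\[
v^n(k) = \code(\nu^n(k)) = \code\bigl(\lambda(\decode_n(a^n_k))\bigr) = \lambda^\#(a^n_k),
\]
where the last equality is exactly the definition of the integer-level map $\lambda^\#(m) = \code(\lambda(\decode_n(m)))$. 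Finally, substituting $a^n_k = 2^{n-1} + k - 1$ gives the displayed formula.

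The only real content — and the step I expect to require the most care — is justifying that the indexing of $\nu^n(1), \ldots, \nu^n(2^{n-1})$ matches the lexicographic ordering of the canonical representatives $\decode_n(a^n_k)$, rather than this being merely an arbitrary relabeling. This rests on two facts that must be cited cleanly: that the canonical-representative convention (leading bit $1$) from Remark~\ref{remark:lambda} gives a bijection between vertices of $\bone$-$\operatorname{CUT}(n)$ and $\{0,1\}^{n}$ with $x_1=1$, and that $\lambda$ (hence $\lambda^\#$) is injective on this canonical set, which follows from Proposition~\ref{prop:monotone} together with $\lambda(\bx)=\lambda(\overline{\bx})$. Everything else is bookkeeping: tracking that $\code$ and $\decode_n$ are mutually inverse on $\{0, \ldots, 2^n - 1\}$, and that $\{a^n_k\}_{k=1}^{2^{n-1}} = \{2^{n-1}, \ldots, 2^n - 1\}$ enumerates exactly the integers whose $n$-bit expansion starts with $1$. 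No estimates or nontrivial combinatorics are needed; the corollary is a consolidation of the earlier propositions into the notation that the rest of Section~5 will use.
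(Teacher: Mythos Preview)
Your proposal is correct and follows essentially the same route as the paper: the Corollary is stated there as an immediate consequence of Proposition~\ref{prop:lambda-cut-relation} and Remark~\ref{remark:lambda}, with no further argument given. Your treatment is in fact more careful than the paper's, since you explicitly justify the consistency of the indexing via Proposition~\ref{prop:monotone}, whereas the paper leaves that implicit.
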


\vskip6mm
\noindent Now we study the values of $\lambda^\#$ in order to identify precisely all vertices $v^n(k)$.

\noindent Out of $2^{n-1}$ vertices, we can immediately identify two:
\[
v^n(1) = \lambda^\#(a^n_1) = 2^{\binom{n-1}{2}} - 1, \quad v^n(2^{n-1}) = \lambda^\#(a^n_{2^{n-1}}) = 2^{\binom{n}{2}} - 1.
\]

\noindent These boundary vertices represent the minimum and maximum encoded values, respectively.

The monotonicity established in Proposition~\ref{prop:monotone} ensures that vertex codes preserve the ordering of their indices:
\begin{Corollary}
The sequence $v^n(1),\ldots, v^n(2^{n-1})$ is increasing.
\end{Corollary}

\vskip4mm
\noindent
We now present an easily implementable algorithm that generates the encoded vertices $v^n(\cdot)$ for any $n$.

\subsection*{Algorithm 1: Generation of encoded vertices for $\bone-\cutn$} \label{alg:1}
Let $n \geq 2$ and $m = \binom{n}{2}$. This procedure generates all encoded vertices of the $\bone$-$\operatorname{CUT}(n)$ polytope.
\begin{enumerate}
  \item Enumerate all binary vectors $a_k = (a_k(1), a_k(2), \dots, a_k(n)) \in \{0,1\}^n$ such that $a_k(1)=1$, ordered by the integer value of their binary representation.
  \item For each vector $a_k$, $k=1,\ldots, 2^{n-1}$, define the associated encoded vector
  \[
  \lambda(k) = (\lambda(k)_{ij})_{1 \le i < j \le n} \in \{0,1\}^m
  \]
  by
  \[
  \lambda(k)_{ij} = \ind(a_k(i) = a_k(j)),
  \]
  where $\ind(\cdot)$ denotes the indicator function.
  \item Vertices of $\bone-\operatorname{CUT}(n)$ are then $\nu^n(k) := \lambda(k)$ and their integer codes values are
  \[
  v^n(k) := \code(\lambda(k)), \quad k=1,\ldots, 2^{n-1}.
  \]
\end{enumerate}
\vskip8mm
\begin{Example} \label{example456}
Applying the above algorithm for several values of $n$:
\begin{itemize}
\item $n=4$: $v^4 = \{7, 12, 18, 25, 33, 42, 52, 63\}$
\item $n=5$: $v^5 = \{63, 116, 170, 225, 281, 338, 396, 455, 519, 588, 658, 729, 801, 874, 948, 1023\}$
\item $n=6$: $v^6 = \{1023, 1972, 2922, 3873, 4825, 5778, 6732, 7687, 8647, 9612, 10578, 11545, 12513, \\
13482, 14452, 15423, 16447, 17524, 18602, 19681, 20761, 21842, 22924, 24007, 25095, 26188, \\
27282, 28377, 29473, 30570, 31668, 32767\}$
\end{itemize}
\end{Example}
\vskip6mm
When we plot the encoded vertices versus their indices, we notice a striking linear tendency. In Figure~\ref{fig:n7print}, encoded vertices $v^n(k)$ are plotted against $k$ for $n=7$.

\begin{figure}[h!]
\centering
\includegraphics[width=0.8\textwidth]{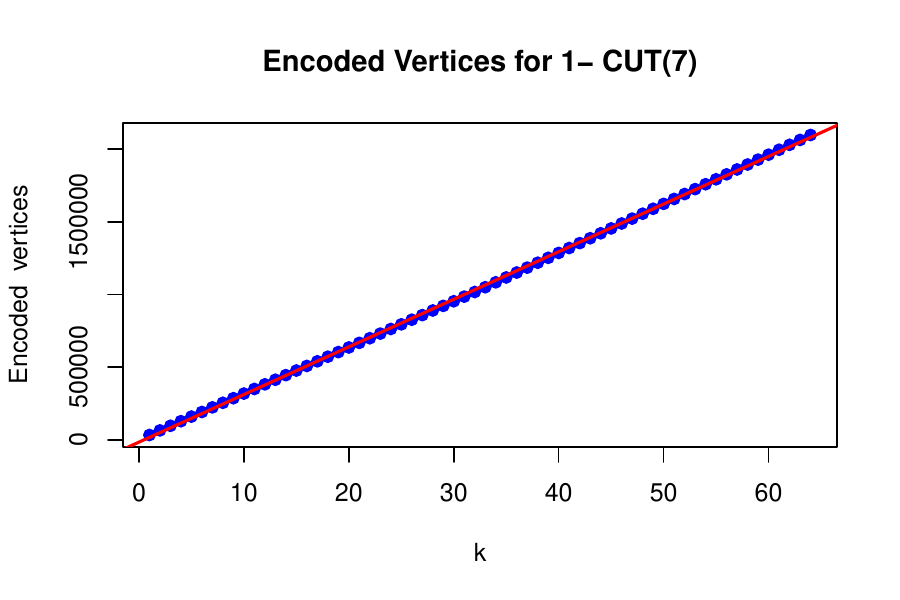}
\caption{Encoded vertices $v^n(k)$ plotted versus $k=1,\ldots, 2^{n-1}$, for $n=7$.}
\label{fig:n7print}
\end{figure}

Linear behaviour is preserved when $v^n$ are scaled by $2^{\binom{n-1}{2}}$. However, the vertices are not exactly on a line, so we refer to this as an \emph{almost linear ordering}. In Figure~\ref{fig:scaled}, we see that a line fitted to scaled $v^n$ is $y=x-1/2$. The residuals also follow a pattern and we investigate that below. 

\begin{figure}[h!] 
\centering
\includegraphics[height=0.45\textheight]{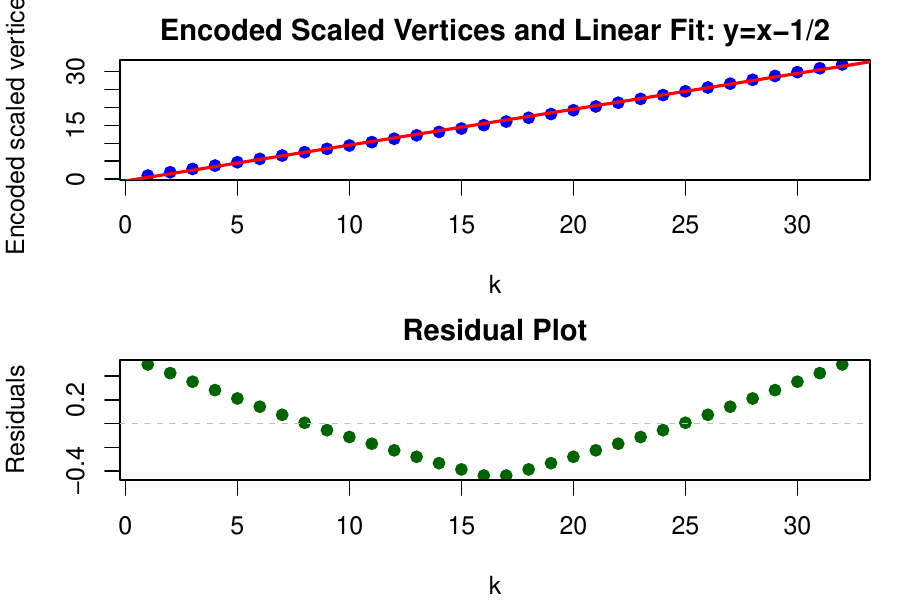}
\caption{Top: Scaled vertices $v^n(k)/2^{\binom{n-1}{2}}$ plotted versus $k=1,\ldots, 2^{n-1}$, for $n=6$. The fitted line $y= x - 0.5$ is obtained using linear regression. Bottom: Residuals from the linear fit, indicating small but structured deviations from linearity.}
\label{fig:scaled}
\end{figure}


The observed near-linear alignment of encoded vertices is not merely a numerical coincidence, but can be formalized as follows.


\begin{Proposition}[Almost linear ordering of vertices]
For $n \geq 2$, $k=1,\ldots,2^{n-1}$ and $v^n(k) = \lambda^\#(a_k^n)$,
\[
k-1 \leq \frac{v^n(k)}{2^{\binom{n-1}{2}}} < k.
\]
\end{Proposition}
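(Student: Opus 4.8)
\quad
The plan is to make the near-linearity completely explicit: I will show that, written in base $2$, the encoded vertex $v^n(k)$ consists of the $(n-1)$-bit block representing $k-1$ followed by a $\binom{n-1}{2}$-bit tail, so that $v^n(k)$ lies in the half-open dyadic interval $\bigl[(k-1)\,2^{\binom{n-1}{2}},\, k\,2^{\binom{n-1}{2}}\bigr)$.

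The first step is to rewrite the argument as a concatenation. Since $a_k^n = 2^{n-1} + (k-1)$ with $0 \le k-1 \le 2^{n-1}-1$, its length-$n$ binary string is $1 \mathbin{\|} z_s$, where $z_s$ is the length-$(n-1)$ binary representation of $k-1$; in particular $\code(z_s) = k-1$. The second step applies the recursive rule of Lemma~\ref{lemma:lambda-recursion} for strings beginning with $1$:
\[
\lambda^s(1 \mathbin{\|} z_s) = z_s \mathbin{\|} \lambda^s(z_s).
\]
Here $z_s$ has $n-1$ bits and $\lambda^s(z_s)$ has $\binom{n-1}{2}$ bits, so the right-hand side has $n-1+\binom{n-1}{2}=\binom{n}{2}$ bits, as it must. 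Passing to integers, with $\code(u_s \mathbin{\|} w_s)=\code(u_s)\cdot 2^{|w_s|}+\code(w_s)$ and $|w_s|=\binom{n-1}{2}$, this gives
\[
v^n(k) = \lambda^\#(a_k^n) = (k-1)\,2^{\binom{n-1}{2}} + \code\bigl(\lambda^s(z_s)\bigr).
\]
The final step is the bound on the tail: since $\lambda^s(z_s)\in\{0,1\}^{\binom{n-1}{2}}$, we have $0\le \code(\lambda^s(z_s))\le 2^{\binom{n-1}{2}}-1$, hence $(k-1)\,2^{\binom{n-1}{2}}\le v^n(k)<k\,2^{\binom{n-1}{2}}$; dividing by $2^{\binom{n-1}{2}}$ yields the claim. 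The asymmetry of the bounds (closed on the left, open on the right) is exactly the statement that a $\binom{n-1}{2}$-bit integer ranges over $[0,2^{\binom{n-1}{2}})$.

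The only point of care is the hypothesis of Lemma~\ref{lemma:lambda-recursion}, which requires the appended string to have length at least $2$; the displayed recursion therefore applies for $n-1\ge 2$, i.e.\ $n\ge 3$. The case $n=2$ is checked directly from Remark~\ref{remark:v12}: there $\binom{n-1}{2}=0$, $v^2=\{0,1\}$, and $v^2(k)/2^0=v^2(k)=k-1\in[k-1,k)$ for $k\in\{1,2\}$. I do not expect any genuine obstacle beyond this bookkeeping — the whole argument is a single base-$2$ decomposition once Lemma~\ref{lemma:lambda-recursion} is available; the main thing to get right is that the tail $\lambda^s(z_s)$ has exactly $\binom{n-1}{2}$ bits, which fixes the power of two multiplying $k-1$.
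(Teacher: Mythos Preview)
Your proof is correct and follows essentially the same route as the paper: apply Lemma~\ref{lemma:lambda-recursion} to $1\mathbin{\|}(k-1)_{s[n-1]}$, read off the integer decomposition $v^n(k)=(k-1)\,2^{\binom{n-1}{2}}+\code(\lambda^s((k-1)_{s[n-1]}))$, and bound the $\binom{n-1}{2}$-bit tail. Your explicit treatment of the boundary case $n=2$ (where the hypothesis $k\ge 2$ of Lemma~\ref{lemma:lambda-recursion} fails for the length-$1$ string $z_s$) is a welcome bit of extra care that the paper's proof glosses over.
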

\begin{proof}
Using the recursive rule for $\lambdas$ Lemma~\ref{lemma:lambda-recursion}, the string version of $v^n(k)$ satisfies
\[
v^n(k)_s = \lambda^s(1 \parallel (k-1)_{s[n-1]}) = (k-1)_{s[n-1]} \parallel \lambda^s((k-1)_{s[n-1]}).
\]
Therefore,
\begin{equation}\label{eq:firstrec}
v^n(k) = (k-1) 2^{\binom{n-1}{2}} + \code(\lambda^s((k-1)_{s[n-1]})).
\end{equation}
Since $\lambda^s((k-1)_{s[n-1]})$ has $\binom{n-1}{2}$ bits, $\code(\lambda^s((k-1)_{s[n-1]})) < 2^{\binom{n-1}{2}}$. Dividing by $2^{\binom{n-1}{2}}$ gives the result.
\end{proof}

Equations in the previous proof imply a recursive relation between $v^n(\cdot)$ and $v^{n-1}(\cdot)$ which we formalize next.

This recursive structure is further explored in Section~6, where we introduce the alternating cycle function and its role in the explicit formula for $v^n(k)$.

\subsubsection*{Recursion}

\begin{Proposition}[Recursive relation between $\vn$s] If  $n \geq 2$ and $\vn()$ are encoded vertices of $\bone-\cut(n)$ then 

\begin{eqnarray} \label{recursion}
\vn(k)=   2^{ n -1 \choose 2} (k-1) + 
 \begin{cases}
      v^{n-1}(2^{n-2}+1-k) & \text{  for  } k \in \{1,\ldots,2^{n-2}\}\\
       v^{n-1}(k-2^{n-2}) & \text{  for  } k \in \{2^{n-2}+1,\ldots,2^{n-1}\},
    \end{cases}    
\end{eqnarray}
 where $\binom{1}{2}=0$.
 \end{Proposition}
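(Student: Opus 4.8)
The plan is to combine the identity~\eqref{eq:firstrec} from the previous proof with the recursive rule for $\lambda^s$ (Lemma~\ref{lemma:lambda-recursion}), analyzing two cases according to whether the second bit of $a_k^n$ is $0$ or $1$. Recall that $a_k^n = 2^{n-1}+k-1$, so its binary string is $1 \parallel (k-1)_{s[n-1]}$, and by~\eqref{eq:firstrec} we already know $v^n(k) = (k-1)2^{\binom{n-1}{2}} + \code\bigl(\lambda^s((k-1)_{s[n-1]})\bigr)$. So the whole task reduces to identifying $\code\bigl(\lambda^s((k-1)_{s[n-1]})\bigr)$ with the appropriate $v^{n-1}$ value. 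The key point is that the string $(k-1)_{s[n-1]}$, which has length $n-1$, begins with $0$ when $k-1 < 2^{n-2}$ (i.e.\ $k \le 2^{n-2}$) and begins with $1$ when $k-1 \ge 2^{n-2}$ (i.e.\ $k \ge 2^{n-2}+1$).

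First I would treat the case $k \in \{2^{n-2}+1,\ldots,2^{n-1}\}$. Here $(k-1)_{s[n-1]} = 1 \parallel (k-1-2^{n-2})_{s[n-2]}$, so by Lemma~\ref{lemma:lambda-recursion} applied at length $n-1$,
\[
\lambda^s\bigl((k-1)_{s[n-1]}\bigr) = \lambda^s\bigl(1 \parallel (k-1-2^{n-2})_{s[n-2]}\bigr) = \lambda^s\bigl(a^{n-1}_{k-2^{n-2}}\bigr)_s,
\]
since $a^{n-1}_j = 2^{n-2}+j-1$ has binary string $1\parallel (j-1)_{s[n-2]}$ with $j = k-2^{n-2}$. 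Taking $\code$ of both sides gives $\code\bigl(\lambda^s((k-1)_{s[n-1]})\bigr) = v^{n-1}(k-2^{n-2})$, which is exactly the second branch of~\eqref{recursion}.

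Next I would treat the case $k \in \{1,\ldots,2^{n-2}\}$, which is the more delicate one and where I expect the main (though still mild) obstacle to lie: reconciling the complement appearing in Lemma~\ref{lemma:lambda-recursion} with the index reversal $2^{n-2}+1-k$. Here $(k-1)_{s[n-1]} = 0 \parallel (k-1)_{s[n-2]}$, so Lemma~\ref{lemma:lambda-recursion} gives $\lambda^s\bigl((k-1)_{s[n-1]}\bigr) = \overline{(k-1)_{s[n-2]}} \parallel \lambda^s\bigl((k-1)_{s[n-2]}\bigr)$. The crucial observation is that $\lambda$ is invariant under bitwise complementation of its argument (Lemma~\ref{deflambda}: $\ind(x_i=x_j)$ is unchanged when every $x_i$ is flipped), so $\lambda^s\bigl((k-1)_{s[n-2]}\bigr) = \lambda^s\bigl(\overline{(k-1)_{s[n-2]}}\bigr)$. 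Writing $w = \overline{(k-1)_{s[n-2]}}$, which as an integer equals $(2^{n-2}-1)-(k-1) = 2^{n-2}-k$, we get $\lambda^s\bigl((k-1)_{s[n-1]}\bigr) = w \parallel \lambda^s(w)$ where $w$ is the $(n-2)$-bit string of $2^{n-2}-k$. Since $k \le 2^{n-2}$, $w$ ranges over $0,\ldots,2^{n-2}-1$, so $w$ has a leading $0$ in its $(n-1)$-bit padding but here we need it as an $(n-2)$-bit string; note $1 \parallel w$ is the $(n-1)$-bit string of $2^{n-2}+(2^{n-2}-k) = 2^{n-1}-k$, which is $a^{n-1}_{2^{n-2}+1-k}$. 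Hence by~\eqref{eq:firstrec} at level $n-1$,
\[
v^{n-1}(2^{n-2}+1-k) = (2^{n-2}-k)2^{\binom{n-2}{2}} + \code(\lambda^s(w)) = \code\bigl(w \parallel \lambda^s(w)\bigr) = \code\bigl(\lambda^s((k-1)_{s[n-1]})\bigr),
\]
which is the first branch of~\eqref{recursion}. Substituting back into~\eqref{eq:firstrec} completes the proof; the degenerate convention $\binom{1}{2}=0$ simply handles the base case $n=2$, where the formula should be checked directly against Remark~\ref{remark:v12}.
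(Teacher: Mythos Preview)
Your proof is correct and follows essentially the same approach as the paper: both start from~\eqref{eq:firstrec}, split according to the leading bit of $(k-1)_{s[n-1]}$, apply Lemma~\ref{lemma:lambda-recursion}, and invoke complement invariance of $\lambda$ to identify the remainder with a $v^{n-1}$ value. The only organizational difference is that in the second case ($k>2^{n-2}$) you recognize $(k-1)_{s[n-1]}$ directly as the string of $a^{n-1}_{k-2^{n-2}}$, whereas the paper complements first and reduces to the other case; your route is slightly cleaner here.
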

 
 \begin{proof}
 For $n \geq 2$, $k=1,\ldots,2^{n-1}$ let's look again at $\lambdas((k-1)_{s[n-1]})$ in Equation (\ref{eq:firstrec}). 
 \begin{enumerate}
 \item 
 If $k \in \{1,\ldots,2^{n-2}\}$ then $k-1 \in \{0,\ldots,2^{n-2}-1\}$ and in its binary represenatation using $n-1$ bits means that the first bit will be $0$. Then 
 \[
 \lambdas((k-1)_{s[n-1]})= \lambdas(0 \parallel (k-1)_{s[n-2]}) = \overline{(k-1)_{s[n-2]}} \parallel \lambdas((k-1)_{s[n-2]})
 \]
by Lemma \ref{lemma:lambda-recursion}.
Then, remembering that $\lambdas(\bx_{s[n-2]})=\lambdas(\overline{\bx_{s[n-2]}})$ and that $\code(\bar{\bx})=(2^{n-1}-1)-\code(\bx)$
\begin{eqnarray*}
\code( \lambdas((k-1)_{s[n-1]})) &=&   (2^{n-1}-1-(k-1))\,2^{n-2 \choose 2} + \code (\lambdas((k-1)_{s[n-2]}))  \nonumber \\
                                                    &=& ((2^{n-1}- (k-1))-1))\,2^{n-2 \choose 2} + \code (\lambdas((2^{n-1}- (k-1))_{s[n-2]})) \nonumber \\
            \mbox{by (\ref{eq:firstrec})}  &=& v^{n-1} (2^{n-2}-(k-1)). 
\end{eqnarray*}

\item For $k \in \{2^{n-2}+1,\ldots,2^{n-1}\}$ the binary represenatation of $k-1$ using $n-1$ bits and its complement in binary representation, whose integer value is $2^{n-1}-1-(k-1) \leq 2^{n-2}$ and we can apply  the previous case result
\[
 \code(\lambdas((k-1)_{s[n-1]}))= \code(\lambdas(2^{n-1}-k)_{s[n-1]})= v^{n-1}(2^{n-2}-2^{n-1}+k)=v^{n-1}(k-2^{n-2}).
 \]
\end{enumerate} 
Combining both cases yields the stated recursion.
 \end{proof}

The recursion  \ref{recursion} relates $k$ to $2^{n-2}+1-k$ or $k-2^{n-2}$ depending on whether $k$ is in the first or second half of the $\{1,\ldots,2^{n-1}\}$.  This map will be defined using a function that we named {\em Alternating Cycle Function}. 


Beyond their almost linear and recursive properties, the encoded vertices exhibit other properties governed by this integer-valued function introduced below.
\section{Alternating Cycle Function}

\begin{Definition}[Alternating Cycle Function] \label{def:SN}
For integers \(N \geq 2\), the \emph{Alternating Cycle function} \(S^N \colon \mathbb{N} \to \mathbb{N}\) is defined as:
\[
S^N(k) = 
\begin{cases} 
N + 1 - \left(k - \left\lfloor \frac{k-1}{N} \right\rfloor N\right), & \text{if } \left\lfloor \frac{k-1}{N} \right\rfloor \text{ is even}, \\ 
k - \left\lfloor \frac{k-1}{N} \right\rfloor N, & \text{if } \left\lfloor \frac{k-1}{N} \right\rfloor \text{ is odd}
\end{cases}
\]
for all $k \in \N$.
\end{Definition}

Having this definition we can rewrite the recursion between vertices $\vn$ as
\begin{equation} \label{eq:vertsn}
v^n(k) = 2^{\binom{n-1}{2}} (k - 1) + v^{n-1}\left(S^{2^{n-2}}(k)\right).
\end{equation}
This motivates study of the alternating cycle function which follows.

\vskip5mm
Throughout this work, \(N\) will always be a power of \(2\). Table~\ref{table:1} displays values of \(S^N(k)\) for \(N = 2, 4, 8, 16, 32\) and \(k = 1, \ldots, 16\). Figure~\ref{figure:S} illustrates the function's behavior for \(N = 2, 4, 8\) over \(k = 1, \ldots, 30\).

\begin{table}[h!]
\centering
\resizebox{\textwidth}{!}{%
\begin{tabular}{|c|c|c|c|c|c|c|c|c|c|c|c|c|c|c|c|c|} 
\hline
\(k\) & 1 & 2 & 3 & 4 & 5 & 6 & 7 & 8 & 9 & 10 & 11 & 12 & 13 & 14 & 15 & 16 \\ 
\hline
\(S^2(k)\) & 2 & 1 & 1 & 2 & 2 & 1 & 1 & 2 & 2 & 1 & 1 & 2 & 2 & 1 & 1 & 2 \\ 
\hline
\(S^4(k)\) & 4 & 3 & 2 & 1 & 1 & 2 & 3 & 4 & 4 & 3 & 2 & 1 & 1 & 2 & 3 & 4 \\ 
\hline
\(S^8(k)\) & 8 & 7 & 6 & 5 & 4 & 3 & 2 & 1 & 1 & 2 & 3 & 4 & 5 & 6 & 7 & 8 \\ 
\hline
\(S^{16}(k)\) & 16 & 15 & 14 & 13 & 12 & 11 & 10 & 9 & 8 & 7 & 6 & 5 & 4 & 3 & 2 & 1 \\ 
\hline
\(S^{32}(k)\) & 32 & 31 & 30 & 29 & 28 & 27 & 26 & 25 & 24 & 23 & 22 & 21 & 20 & 19 & 18 & 17 \\ 
\hline
\end{tabular}%
}
\caption{Values of the alternating cycle function \(S^N(k)\) for \(N = 2, 4, 8, 16, 32\) and \(k = 1, \ldots, 16\)}
\label{table:1}
\end{table}

\begin{Remark}[Key Properties of \(S^N\)] \label{remark:sn}
The alternating cycle function exhibits three fundamental properties:
\begin{enumerate}
\item \textbf{Periodicity}: \(S^N\) has a period of \(2N\), i.e., \(S^N(k + 2N) = S^N(k)\) for all \(k \in \mathbb{N}\).
\item \textbf{Wave Pattern}: Over each interval of length \(2N\), \(S^N(k)\) decreases from \(N\) to \(1\) and then increases from \(1\) to \(N\).
\item \textbf{Local Palindromy}: Within each \(2N\)-length block, the sequence is palindromic (symmetric under reversal).
\end{enumerate}
\end{Remark}

\begin{figure}[h!]
\centering
\includegraphics[width=0.8\textwidth]{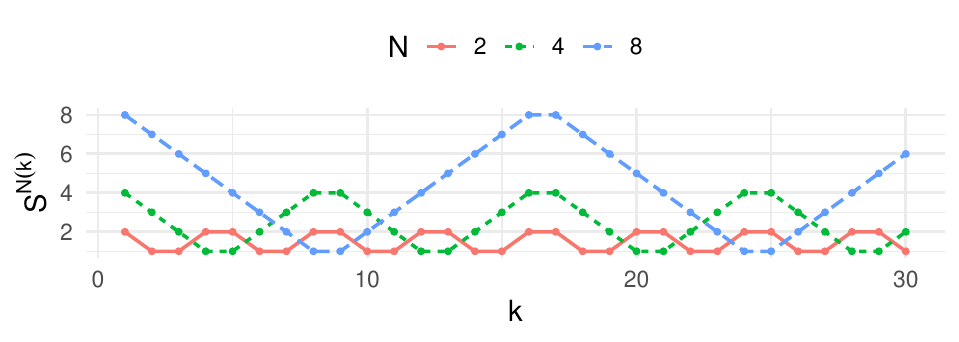}
\caption{Graphs of \(S^N(k)\) for \(N = 2, 4, 8\) and \(k = 1, \ldots, 30\).}
\label{figure:S}
\end{figure}

\begin{Remark}
\noindent
The alternating pattern of \( S^N \) resembles zigzag and discrete triangular wave sequences found in other fields such as signal processing. Here the function \( S^N \) is defined over the integers with a blockwise, parity-dependent structure. It exhibits composition properties that are not typically formalized in existing literature. For small values of \( N \), the resulting integer sequences do not appear in the OEIS [The On-line Encyclopedia of Integer Sequences], suggesting that this formulation and its structural properties have not been previously studied in this particular form.
\end{Remark}
\vskip5mm

\subsection*{Power-of-Two Compositions}
The alternating-cycle function exhibits composition invariance across powers of two in the period, characterized by the following identity:

\begin{Proposition}[Power-of-Two Composition Invariance] \label{Prop:fractal}
For \(N \geq 2\) and \(k \in \mathbb{N}\):
\begin{enumerate}
\item[i)] 
\[
S^N(S^{2N}(k)) = S^N(k).
\]
\item[ii)]  For all \(m \geq 1\),
\[
S^N(S^{2^m N}(k)) = S^N(k).
\]
\end{enumerate}
\end{Proposition}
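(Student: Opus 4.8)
The plan is to prove part (i) directly from the definition of $S^N$ and then obtain part (ii) by a trivial induction on $m$, using (i) as the base case. Part (ii) follows because $2^m N = 2(2^{m-1}N)$, so applying (i) with $N$ replaced by $2^{m-1}N$ gives $S^{2^{m-1}N}(S^{2^mN}(k)) = S^{2^{m-1}N}(k)$; iterating this and then invoking the induction hypothesis $S^N(S^{2^{m-1}N}(k')) = S^N(k')$ collapses the whole tower down to $S^N(k)$. So the real content is entirely in part (i), and I would present that carefully and treat (ii) as a short paragraph.

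For part (i), the key observation is that $S^{2N}$ maps $\mathbb{N}$ into $\{1, 2, \ldots, 2N\}$, and on this range $S^N$ only ``sees'' two consecutive blocks of length $N$: the block $\{1,\ldots,N\}$ (where $\lfloor (k-1)/N\rfloor = 0$ is even, so $S^N(j) = N+1-j$) and the block $\{N+1,\ldots,2N\}$ (where $\lfloor (k-1)/N\rfloor = 1$ is odd, so $S^N(j) = j - N$). Meanwhile, by the Wave Pattern property (Remark~\ref{remark:sn}) together with periodicity, it suffices to verify the identity for $k$ in a single period $\{1,\ldots,4N\}$ of $S^{2N}$; and since $S^{2N}$ has the local palindromy and wave structure, that period splits naturally into four sub-blocks of length $N$ on which $S^{2N}(k)$ takes, respectively, the values $2N, 2N-1, \ldots, N+1$ (i.e.\ the decreasing half-first-half), then $N, N-1, \ldots, 1$, then $1, 2, \ldots, N$, then $N+1, \ldots, 2N$. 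I would make this explicit by writing $k = qN + r$ with $q \in \{0,1,2,3\}$ and $r \in \{1,\ldots,N\}$ (adjusting indices so the arithmetic is clean), compute $S^{2N}(k)$ in each of the four cases, then feed the result into $S^N$ and check it equals $S^N(k)$, which by the same decomposition depends only on the parity of $q$. The four cases pair up: $q=0$ and $q=3$ both give $S^N = N+1-r$, and $q=1,2$ both give $S^N = r$ — matching exactly the two-valued behavior of $S^N$ on a period-$2N$ window.

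The main obstacle is purely bookkeeping: getting the floor-function index shifts right so that the ``$k-1$'' offsets in the definition line up correctly across the composition, and handling the boundary values $r = N$ versus $r = 1$ where the block index increments. I expect the cleanest route is to first record a small lemma — essentially a restatement of the Wave Pattern property in closed form, namely that for $k = qN + r$ with $1 \le r \le N$ one has $S^N(k) = N+1-r$ if $q$ is even and $S^N(k) = r$ if $q$ is odd — and then the whole proposition reduces to checking that $q \mapsto \lfloor (S^{2N}(k)-1)/N\rfloor$ has the same parity as $\lfloor (k-1)/N \rfloor = q$ whenever $S^{2N}$ is applied, and that the residue is preserved in the even case and flipped-then-reflected consistently. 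Once that auxiliary closed form is in hand, part (i) is a finite case check over $q \bmod 4$ and part (ii) is the one-line induction sketched above.
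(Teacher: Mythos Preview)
Your proposal is correct and follows essentially the same route as the paper: reduce part~(i) to a finite case check over one period of $S^{2N}$ (the paper uses the split $k\le 2N$ versus $2N<k\le 4N$, which is the same as your $q$-block decomposition), and then obtain part~(ii) by induction on $m$ using part~(i) applied at the larger scale. One small slip to fix when you write it out: the pairing of the four $q$-blocks by the value of $S^N(k)$ is $\{0,2\}\mapsto N+1-r$ and $\{1,3\}\mapsto r$ (reflecting the $2N$-periodicity of $S^N$), not $\{0,3\}$ and $\{1,2\}$ as you wrote.
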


\begin{proof}
\textbf{Part (i):} Since \(S^{2N}\) has period \(4N\), which is a multiple of the period of \(S^N\), it suffices to verify the identity for \(k \leq 4N\). By direct computation:
\[
S^{2N}(k) = 
\begin{cases} 
2N - k + 1, & \text{if } k \leq 2N, \\ 
k - 2N, & \text{if } k > 2N.
\end{cases}
\]
For \(k \leq 2N\):
\[
S^N(2N - k + 1) = 
\begin{cases} 
k - N, & \text{if } k \geq N + 1, \\ 
N - k + 1, & \text{if } k < N + 1
\end{cases}
= S^N(k).
\]
For \(2N < k \leq 4N\), periodicity gives \(S^N(k - 2N) = S^N(k)\).

\textbf{Part (ii):} We proceed by induction on \(m\). The base case (\(m = 1\)) is established in part (i). Assume the result holds for some \(m \geq 1\).
Taking $ S^{2^{m+1}N}(k)$ as a function argument in the hypothesis we obtain
 \begin{equation}\label{eq:m+1}
 S^N(S^{{2^m}N}(S^{2^{m+1}N}(k)))= S^N(S^{2^{m+1}N}(k))).
 \end{equation}
Now consider:
\[
S^N(S^{2^m N}(S^{2^{m+1} N}(k))) = S^N(S^{2^m N}(k)) = S^N(k),
\]
where the first equality follows from part (i) applied to \(2^m N\), and the second equality is the induction hypothesis. The generalized invariance follows.
\end{proof}

\subsection*{Self-Composition Properties}
The function \(S^N\) also exhibits notable regularity under composition with itself:

\begin{Proposition}[Self-Composition Properties] \label{prop:composition}
For \(S^N\) with \(N \geq 2\) and any \(k \in \mathbb{N}\):
\begin{enumerate}
\item[a)] \emph{Complementary Action}:
\[
S^N(S^N(k)) = N + 1 - S^N(k).
\]
\item[b)] \emph{Triple Composition Idempotence}:
\[
S^N \circ S^N \circ S^N = S^N.
\]
\end{enumerate}
\end{Proposition}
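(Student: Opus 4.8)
\textbf{Proof proposal for Proposition~\ref{prop:composition} (Self-Composition Properties).}

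The plan is to derive both statements from the structural facts about $S^N$ already available, namely the wave pattern and local palindromy of Remark~\ref{remark:sn}, together with the explicit piecewise formula of Definition~\ref{def:SN}. The key observation for part a) is that the image of $S^N$ always lies in $\{1, 2, \ldots, N\}$, and on this range $S^N$ acts by a single, well-understood rule. Concretely, for $j \in \{1, \ldots, N\}$ we have $\lfloor (j-1)/N \rfloor = 0$, which is even, so the first branch of Definition~\ref{def:SN} applies and gives $S^N(j) = N + 1 - (j - 0) = N + 1 - j$. Thus the restriction of $S^N$ to $\{1, \ldots, N\}$ is simply the involution $j \mapsto N + 1 - j$. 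Since $S^N(k) \in \{1, \ldots, N\}$ for every $k \in \mathbb{N}$, applying $S^N$ once more yields $S^N(S^N(k)) = N + 1 - S^N(k)$, which is exactly part a).

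First I would state and prove the auxiliary claim that $1 \le S^N(k) \le N$ for all $k$, reading it off directly from the two branches of the definition: in the odd-floor branch the value $k - \lfloor (k-1)/N \rfloor N$ is the ``remainder-type'' quantity lying in $\{1, \ldots, N\}$, and in the even-floor branch $N + 1$ minus that same quantity again lands in $\{1, \ldots, N\}$. (Alternatively one may simply cite the Wave Pattern property of Remark~\ref{remark:sn}.) Then I would record the one-line computation $S^N(j) = N + 1 - j$ for $j \in \{1, \ldots, N\}$ as above, and part a) follows immediately by substituting $j = S^N(k)$.

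For part b), I would compose once more. By part a), $S^N(S^N(k)) = N + 1 - S^N(k)$. Since $S^N(k) \in \{1, \ldots, N\}$, the value $N + 1 - S^N(k)$ also lies in $\{1, \ldots, N\}$, so applying $S^N$ a third time and using $S^N(j) = N+1-j$ on that range gives
\[
S^N\big(S^N(S^N(k))\big) = N + 1 - \big(N + 1 - S^N(k)\big) = S^N(k).
\]
Hence $S^N \circ S^N \circ S^N = S^N$ as functions on $\mathbb{N}$, which is part b).

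I do not anticipate a genuine obstacle here: once the range inclusion $S^N(\mathbb{N}) \subseteq \{1, \ldots, N\}$ and the explicit form $S^N|_{\{1,\ldots,N\}}(j) = N+1-j$ are in hand, both parts are immediate. The only point requiring a modicum of care is the verification that the even-floor branch genuinely produces a value in $\{1, \ldots, N\}$ rather than, say, $0$ or $N+1$ — this hinges on the ``remainder'' $k - \lfloor (k-1)/N \rfloor N$ lying in $\{1, \ldots, N\}$ (note the $-1$ inside the floor, which shifts the usual $\{0, \ldots, N-1\}$ remainder range to $\{1, \ldots, N\}$), and I would spell that out explicitly to keep the argument self-contained.
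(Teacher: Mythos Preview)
Your proposal is correct and follows essentially the same approach as the paper: both arguments rest on the observation that $S^N$ takes values in $\{1,\ldots,N\}$, and that on this range $S^N$ acts as the involution $j \mapsto N+1-j$, from which part a) is immediate and part b) follows by one further application. You spell out the range inclusion and the floor computation more explicitly than the paper does, but the underlying idea is identical.
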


\begin{proof}
\textbf{Part (a):} Since \(S^N(k) \in \{1, \ldots, N\}\) by definition, we have:
\[
S^N(S^N(k)) = 
\begin{cases} 
N + 1 - S^N(k), & \text{if } S^N(k) \leq N, 
\end{cases}
= N + 1 - S^N(k).
\]

\textbf{Part (b):} Using part (a) twice:
\[
S^N(S^N(S^N(k))) = S^N(N + 1 - S^N(k)) = N + 1 - (N + 1 - S^N(k)) = S^N(k).
\]
\end{proof}

\section{Explicit Formula for \(v^n(\cdot)\)}
We are now ready to state our main result: an explicit, closed-form formula for the encoded vertices of $\bone$-$\operatorname{CUT}(n)$, valid for all $n$.
 We also include a worked example to illustrate the computation and conclude with implications for the original \(\operatorname{CUT}(n)\) polytope.

\begin{Theorem}[Formula for Encoded Vertices of \(\bone\)-\(\operatorname{CUT}(n)\)] \label{Thm:1}
Let \(n \in \mathbb{N}\) and \(k = 1, \ldots, 2^{n-1}\). The encoded vertices are given by:
\begin{itemize}
\item \(v^1(1) = 0\)
\item \(v^2(1) = 0\), \(v^2(2) = 1\)
\item For \(n \geq 3\):
\[
v^n(k) = 2^{\binom{n-1}{2}} (k - 1) + \sum_{j=1}^{n-2} 2^{\binom{j}{2}} \left( S^{2^j}(k) - 1 \right),
\]
where \(\binom{1}{2} = 0\).
\end{itemize}
\end{Theorem}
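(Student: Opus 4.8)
The plan is to prove the formula by induction on $n$, using the recursion \eqref{eq:vertsn} together with the power-of-two composition invariance from Proposition~\ref{Prop:fractal}. The base cases $n=1$ and $n=2$ are exactly the values recorded in Remark~\ref{remark:v12}, and one should also check $n=3$ directly: here the sum $\sum_{j=1}^{n-2}$ has a single term $j=1$, so the claimed formula reads $v^3(k) = 2^{\binom{2}{2}}(k-1) + 2^{\binom{1}{2}}(S^2(k)-1) = (k-1) + (S^2(k)-1)$, and comparing against $v^3 = \{1,2,4\}$ computed from Example~\ref{example456}-style data (or Table~\ref{table:n34}) confirms it.

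For the inductive step, assume the formula holds for $n-1$ (with $n \geq 4$), i.e.
\[
v^{n-1}(\ell) = 2^{\binom{n-2}{2}}(\ell - 1) + \sum_{j=1}^{n-3} 2^{\binom{j}{2}}\left(S^{2^j}(\ell) - 1\right)
\]
for all $\ell = 1,\ldots,2^{n-2}$. Substitute $\ell = S^{2^{n-2}}(k)$ into this expression and plug the result into \eqref{eq:vertsn}. This gives
\[
v^n(k) = 2^{\binom{n-1}{2}}(k-1) + 2^{\binom{n-2}{2}}\left(S^{2^{n-2}}(k) - 1\right) + \sum_{j=1}^{n-3} 2^{\binom{j}{2}}\left(S^{2^j}\!\left(S^{2^{n-2}}(k)\right) - 1\right).
\]
The first two terms already match the $j=n-2$ term and the leading term of the target formula, so it remains to show that for each $j = 1,\ldots,n-3$ we have $S^{2^j}(S^{2^{n-2}}(k)) = S^{2^j}(k)$. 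Since $j \leq n-3$, write $n-2 = j + m$ with $m \geq 1$, so $2^{n-2} = 2^m \cdot 2^j$; then Proposition~\ref{Prop:fractal}(ii) with $N = 2^j$ gives exactly $S^{2^j}(S^{2^m \cdot 2^j}(k)) = S^{2^j}(k)$. Reindexing the sum then yields $v^n(k) = 2^{\binom{n-1}{2}}(k-1) + \sum_{j=1}^{n-2} 2^{\binom{j}{2}}(S^{2^j}(k)-1)$, completing the induction.

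The only genuine subtlety — and the step I would be most careful about — is the validity of \eqref{eq:vertsn} itself for the relevant range of $k$, and more precisely the interaction between the argument range of $S^{2^{n-2}}$ and the domain on which the inductive hypothesis for $v^{n-1}$ is available. The function $S^{2^{n-2}}$ maps $\{1,\ldots,2^{n-1}\}$ into $\{1,\ldots,2^{n-2}\}$ (indeed, over one period $2N = 2^{n-1}$ it takes exactly the values $N, N-1,\ldots,1,1,\ldots,N$ as in Remark~\ref{remark:sn}), so $\ell = S^{2^{n-2}}(k)$ is always a legal index for $v^{n-1}$; this is what makes the substitution legitimate, and it is worth stating explicitly. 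A minor bookkeeping point is that $\binom{n-1}{2} = \binom{n-2}{2} + (n-2)$ is not actually needed — the exponents $\binom{j}{2}$ transfer verbatim from the $(n-1)$-formula to the $n$-formula because the summation index $j$ is not shifted — so no exponent arithmetic is required beyond recognizing that the new leading term has exponent $\binom{n-1}{2}$, which comes directly from \eqref{eq:vertsn}. Once the composition-collapse is in hand via Proposition~\ref{Prop:fractal}, the argument is a short and clean substitution.
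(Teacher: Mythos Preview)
Your proof is essentially identical to the paper's: induction on $n$, base cases $n=1,2,3$, then plug the recursion \eqref{eq:vertsn} into the inductive hypothesis and collapse the inner compositions via Proposition~\ref{Prop:fractal}. The inductive step is correct and, if anything, slightly more explicit than the paper's (you spell out $n-2=j+m$ to invoke part~(ii), and you explicitly note that $S^{2^{n-2}}$ lands in $\{1,\ldots,2^{n-2}\}$ so the substitution is legal).

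There is, however, an arithmetic slip in your $n=3$ check: $\binom{2}{2}=1$, so $2^{\binom{2}{2}}=2$, not $1$; the formula reads $v^3(k)=2(k-1)+\bigl(S^2(k)-1\bigr)$, which for $k=1,2,3,4$ gives $(1,2,4,7)$ --- and note that there are $2^{n-1}=4$ vertices, not three. With that correction the base case matches Table~\ref{table:n34} and the proof goes through.
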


\begin{proof}
The proof proceeds by induction on \(n\).

\textbf{Base Cases:}
\begin{itemize}
\item For \(n = 1\) and \(n = 2\) it was observed (see Remark \ref{remark:v12}) that easily \(v^1(1) = 0\) and  \(v^2(1) = 0\), \(v^2(2) = 1\).
\item For \(n = 3\): Explicit calculation using \(S^2 = (2,1,1,2)\) yields:
\[
v^3 = (1, 2, 4, 7),
\]
matching the expected vertices.
\end{itemize}

\textbf{Inductive step:}  
Assume the formula holds for \(n-1\), that is:
\[
v^{n-1}(k) = 2^{\binom{n-2}{2}}(k - 1) + \sum_{j = 1}^{n - 3} 2^{\binom{j}{2}}(S^{2^j}(k) - 1)
\quad \text{for all } k \in \{1, \ldots, 2^{n-2}\}.
\]

Let us now consider \(v^n(k)\) for any \(k \in \{1, \ldots, 2^{n-1}\}\).  
From the recurrence (Equation~\eqref{eq:vertsn}):
\[
v^n(k) = 2^{\binom{n - 1}{2}}(k - 1) + v^{n - 1}(S^{2^{n-2}}(k)).
\]

Note that \(S^{2^{n-2}}(k) \in \{1, \ldots, 2^{n-2}\}\), regardless of whether \(k \leq 2^{n-2}\) or not.  
Hence, by the inductive hypothesis:
\[
v^{n - 1}(S^{2^{n-2}}(k)) = 2^{\binom{n - 2}{2}}(S^{2^{n-2}}(k) - 1) + \sum_{j = 1}^{n - 3} 2^{\binom{j}{2}}(S^{2^j}(S^{2^{n-2}}(k)) - 1).
\]

Now, by Proposition~\ref{Prop:fractal}, we have:
\[
S^{2^j}(S^{2^{n-2}}(k)) = S^{2^j}(k), \quad \text{for all } j \leq n-3.
\]

So:
\begin{align*}
v^n(k)
&= 2^{\binom{n-1}{2}}(k - 1) + 2^{\binom{n - 2}{2}}(S^{2^{n-2}}(k) - 1)
+ \sum_{j = 1}^{n - 3} 2^{\binom{j}{2}}(S^{2^j}(k) - 1) \\
&= 2^{\binom{n-1}{2}}(k - 1) + \sum_{j = 1}^{n - 2} 2^{\binom{j}{2}}(S^{2^j}(k) - 1),
\end{align*}
which completes the proof.
\end{proof}

To demonstrate the effectiveness and simplicity of the formula, we provide explicit examples for small values of~$n$.

\subsection*{Worked Example: Vertices of \(\bone\)-\(\operatorname{CUT}(5)\)}
For \(n = 5\), the formula simplifies to:
\[
v^5(k) = 64(k - 1) + \underbrace{(S^2(k) - 1)}_{\text{weight } 2^0} + \underbrace{2(S^4(k) - 1)}_{\text{weight } 2^1} + \underbrace{8(S^8(k) - 1)}_{\text{weight } 2^3}.
\]
Sample computations:
\begin{itemize}
\item \(k = 1\): \(v^5(1) = 0 + (2 - 1) + 2(4 - 1) + 8(8 - 1) = 63\)
\item \(k = 4\): \(v^5(4) = 64 \cdot 3 + (2 - 1) + 2(1 - 1) + 8(5 - 1) = 225\)
\item \(k = 16\): \(v^5(16) = 64 \cdot 15 + (2 - 1) + 2(4 - 1) + 8(8 - 1) = 1023\)
\end{itemize}
The complete vertex set is:
\[
v^5 = (63, 116, 170, 225, 281, 338, 396, 455, 519, 588, 658, 729, 801, 874, 948, 1023),
\]
validating Algorithm~\ref{alg:1}.

\vspace{1em}
We now extend this result to the original \(\operatorname{CUT}(n)\) polytope.

To this end let's name the vertices of the cut polytope as $\omega^n :=\operatorname{vert}(\cutn)$ and $w^n=\code(\omega^n)$. Then
$w^1(1)=0$ and $w^2(1)=1, w^2(2)=0$. 

\begin{Corollary}[Encoded Vertices of \(\operatorname{CUT}(n)\)]
For \(n \geq 3\), the encoded vertices \(w^n(k)\) of \(\operatorname{CUT}(n)\) are given by:
\[
w^n(k) = 2^{\binom{n}{2}} - 1 - \left[ 2^{\binom{n-1}{2}} (k - 1) + \sum_{j=1}^{n-2} 2^{\binom{j}{2}} \left( S^{2^j}(k) - 1 \right) \right].
\]
\end{Corollary}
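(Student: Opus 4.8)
The plan is to derive this directly from the main theorem (Theorem~\ref{Thm:1}) together with the bitwise-complement relationship between $\operatorname{CUT}(n)$ and $\bone$-$\operatorname{CUT}(n)$ established in Proposition~\ref{prop:lambda-cut-relation}. Recall that the vertices of $\bone$-$\operatorname{CUT}(n)$ are obtained by flipping all bits of the $\operatorname{CUT}(n)$ vertices: writing $\omega^n(k) = \mathbf{1} - \nu^n(k)$ (as vectors in $\{0,1\}^{\binom{n}{2}}$), the two vertex sets are in bijection under complementation.

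First I would record the elementary fact that for any binary vector $\bz \in \{0,1\}^m$ with $m = \binom{n}{2}$, complementing all bits corresponds at the integer level to $\code(\overline{\bz}) = (2^m - 1) - \code(\bz)$; this was already noted in the proof of the recursive relation between the $v^n$'s. Applying this with $m = \binom{n}{2}$ gives
\[
w^n(k) = \code(\omega^n(k)) = \code\bigl(\mathbf{1} - \nu^n(k)\bigr) = \bigl(2^{\binom{n}{2}} - 1\bigr) - \code(\nu^n(k)) = \bigl(2^{\binom{n}{2}} - 1\bigr) - v^n(k).
\]
Then I would substitute the closed form for $v^n(k)$ from Theorem~\ref{Thm:1}, valid for $n \geq 3$, namely $v^n(k) = 2^{\binom{n-1}{2}}(k-1) + \sum_{j=1}^{n-2} 2^{\binom{j}{2}}(S^{2^j}(k) - 1)$, which yields the stated identity verbatim.

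The one point requiring a small justification — and the only place where anything could go wrong — is the claim that the index bijection between the canonical representatives of $\operatorname{CUT}(n)$ and those of $\bone$-$\operatorname{CUT}(n)$ is the \emph{identity} on the label $k$, i.e. that $\omega^n(k)$ and $\nu^n(k)$ are genuinely complementary for the \emph{same} $k$, rather than for some permuted index. This follows because both vertex families are generated from the same enumeration of binary vectors $a^n_k = 2^{n-1} + k - 1$ (those with leading bit $1$, in increasing order): $\nu^n(k) = \lambda(\decode_n(a^n_k))$, while the corresponding cut vertex is $\delta(S_k)$ with $S_k = \{\, i : (\decode_n(a^n_k))_i = 1 \,\}$, and $\delta(S_k)_{ij} = \ind(x_i \neq x_j) = 1 - \ind(x_i = x_j) = 1 - \lambda(\bx)_{ij}$ by Lemma~\ref{deflambda}. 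Hence $\omega^n(k) = \mathbf{1} - \nu^n(k)$ with matching indices, and the corollary is immediate. I would also remark briefly that the base cases $w^1(1) = 0$ and $w^2(1) = 1$, $w^2(2) = 0$ are consistent with $w^n = (2^{\binom{n}{2}} - 1) - v^n$ applied to Remark~\ref{remark:v12}, though the displayed formula is only asserted for $n \geq 3$.
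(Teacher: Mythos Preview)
Your proposal is correct and follows exactly the paper's own route: the paper's proof is the one-line observation that $\omega^n(k)=\mathbf{1}-\nu^n(k)$ together with the definition of $\code$ gives $w^n(k)=2^{\binom{n}{2}}-1-v^n(k)$, into which Theorem~\ref{Thm:1} is substituted. Your additional care in checking that the index $k$ is preserved under the complementation bijection is a welcome clarification that the paper leaves implicit.
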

\begin{proof}
This follows directly from the relation $\omega^n(k) = \bone - \nu^n(k)$ and the encoding map \(\code(\cdot)\) definition.
\end{proof}


\section{Concluding Remarks}

\noindent
This work presents a complete description of the vertices of the $\bone$-$\operatorname{CUT}(n)$ ($\cutn$) polytope, valid for all $n$. . 
This result stands out as a rare case of an explicit, closed-form vertex enumeration formula for a central 0/1-polytope - such formulas are not known for prominent cases like the stable set or matching polytopes, where only algorithmic or implicit combinatorial listings are available.

The construction is grounded in a probabilistic interpretation and realized through a binary encoding that reveals a recursive and nearly linear structure in the vertex values. The vertex-generating function that emerges, called the alternating cycle function, exhibits a recursive zigzag behavior with a key power-of-two composition invariance (Proposition~\ref{Prop:fractal}) that drives the enumeration formula. It also displays interesting compositional properties that may have broader applicability.

Beyond potential applications in combinatorial optimization involving $\cutn$, the explicit vertex set derived here may serve as a benchmark for testing or validating vertex enumeration algorithms, particularly those targeting 0/1-polytopes.

The methods developed in this paper originate from a specific connection between multivariate symmetric Bernoulli distributions and the $\cutn$ polytope. Whether similar ideas can be extended to other polytope families - such as those associated with non-symmetric Bernoulli variables - remains an open and promising direction for future investigation.


\section*{Appendix: Vertex Generation Algorithm}
\label{sec:algorithm}

The explicit vertex formula yields a straightforward generation procedure:

\begin{enumerate}
    \item \textbf{Input:} Integer $n \geq 2$.
    \item \textbf{Initialization:} 
    \begin{itemize}
        \item Compute $N = 2^{n-1}$ (number of vertices)
        \item Compute $d = \binom{n}{2}$ (dimension of binary vectors)
    \end{itemize}
    \item \textbf{Define auxiliary function:} The alternating cycle function $S^M(k)$ for integer $M$ and $k \in \{1,\ldots,2M\}$:
    \[
    S^M(k) = 
    \begin{cases}
    M + 1 - k, & \text{if } 1 \leq k \leq M \\
    k - M, & \text{if } M+1 \leq k \leq 2M
    \end{cases}
    \]
    \item \textbf{Vertex generation:} For each $k = 1, \ldots, N$:
    \begin{enumerate}
        \item Compute vertex code:
        \[
        v^n(k) = 2^{\binom{n-1}{2}}(k-1) + \sum_{j=1}^{n-2} 2^{\binom{j}{2}} \left(S^{2^j}(k) - 1 \right)
        \]
        \item Convert $v^n(k)$ to $d$-bit binary representation to obtain the vertex of $\bone$-$\operatorname{CUT}(n)$
    \end{enumerate}
    \item \textbf{Output:} Complete set of $2^{n-1}$ vertices of $\bone$-$\operatorname{CUT}(n)$.
\end{enumerate}

\begin{Remark}
The algorithm generates all vertices in a single pass without iterative refinement or specialized data structures.
\end{Remark}


\bibliographystyle{plain}
\bibliography{Berncut}

\end{document}